\def\Sep{\mathcal{S}^\ep}
\def\Lip{W^{1,\infty}}
\def\loc{_{\text{loc}}}
\def\toepk{\stackrel{\ep_k\to0}{\longrightarrow}}
\def\epk{{\ep_k}}
\DeclareMathOperator\curl{curl}
\DeclareMathOperator\dive{div}
\newcommand{\RR}{\mathbb R}
\newcommand{\PP}{\mathbb P}
\newcommand{\pat}{\partial_t}
\newcommand{\na}{\nabla}
\newcommand{\ue}{u_{\varepsilon}}
\newcommand{\ep}{\varepsilon}
\newcommand{\wepk}{\widetilde{u}_{\ep_{k}}}
\newcommand{\wep}{\widetilde{u}_{\ep}}
\newcommand{\ve}{v_{\ep}}
\newcommand{\vet}{\widetilde v_{\ep}}
\newcommand{\vepk}{v_{\ep_{k}}}
\newcommand{\ome}{\omega_\ep}
\newcommand{\lep}{l_\ep}
\newcommand{\norm}[1]{\left\lVert#1\right\rVert}
\newcommand{\vertiii}[1]{{\left\vert\kern-0.25ex\left\vert\kern-0.25ex\left\vert #1 
    \right\vert\kern-0.25ex\right\vert\kern-0.25ex\right\vert}}
\newcounter{comentcount}
\newcounter{teocount}
\newtheorem{lem}{Lemma}
\newtheorem{prop}{Proposition}
\newtheorem{teo}[teocount]{Theorem}  
\newtheorem{defi}{Definition}
\newtheorem{remark}{Remark}
\title[]{On the small rigid body limit in 3D incompressible flows}
\author{Jiao He}
\author{Dragoș Iftimie}
\begin{document}

\begin{abstract}
We consider the evolution of a small rigid body in an incompressible viscous fluid filling the whole space $\RR^3$.
The motion of the fluid is modelled by the Navier-Stokes equations, whereas the motion of the rigid body is described by the conservation law of linear and angular momentum. Under the assumption that the diameter of the rigid body tends to zero and that the density of the rigid body goes to infinity, we prove that the solution of the fluid-rigid body system converges to a solution of the Navier-Stokes equations in the full space without rigid body.  
\end{abstract}

\maketitle

\section{Introduction and Statement of results}

The motion of one or several rigid bodies in a liquid is a classical topic in fluid mechanics. We consider here the motion of an incompressible viscous fluid and a small smooth moving rigid body in the three-dimensional space. In this fluid-rigid body system, we will not take into account the gravity, and we suppose that the rigid body moves under the influence of the fluid. We study the asymptotic behavior of the fluid-rigid body system as the diameter of the rigid body tends to zero. 

Assume that the whole three-dimensional space is occupied by an incompressible viscous fluid of viscosity $\nu >0 $ and by a rigid body of size $\ep$. 
At the initial time,  
the domain of the rigid body $\mathcal{S}_{0}^{\ep}$ is a small non-empty smooth compact simply-connected subset of $\RR^{3}$ included in the ball $B(0,\ep)$ and $\mathcal{F}_{0}^{\ep} =\RR^{3}\setminus \mathcal{S}^{\ep}_0$ is the domain of the fluid. We also denote by $\mathcal{S}^{\ep}(t)$  the region occupied by the rigid body and by $\mathcal{F}^{\ep}(t) = \RR^{3}\setminus \mathcal{S}^{\ep}(t)$  the region occupied by the viscous fluid at time $t$.

In order to describe the motion of the rigid body, we need to specify its center of mass, that we denote by $h_\ep(t)$ and a rotation matrix $\mathcal{R}(t)  \in SO(3)$ which describes how the body rotates compared to the initial position. In other words, we have that 
$$
\mathcal{S}^{\ep}(t)= \{ x \in \RR^3 \; | \;x = h_{\ep}(t) + \mathcal{R}(t) x_0, \;x_0 \in \mathcal{S}^{\ep}_{0}\}.
$$

The velocity of  the solid particle $x(t) = h_{\ep}(t) + \mathcal{R}(t) x_0$ is given by
\begin{align*}
x'(t)&=h'_{\ep}(t) + \mathcal{R}'(t) x_0\\
&=h'_{\ep}(t) + \mathcal{R}'(t)\mathcal{R}(t)^{-1}(x-h_\ep(t))\\
&=h'_{\ep}(t) + \mathcal{R}'(t)\mathcal{R}(t)^{T}(x-h_\ep(t))
\end{align*}
where the superscript $^T$ denotes the transpose. Since  $\mathcal{R}(t)  \in SO(3)$, the matrix $\mathcal{R}'(t)\mathcal{R}(t)^{T}$ is skew-symmetric and can therefore be identified to a three-dimensional rotation vector $\ome(t)$:
\begin{equation*}
\mathcal{R}'(t)\mathcal{R}(t)^{T}z=\ome(t)\times z,\quad z\in\RR^3
\end{equation*}
where $\times $ denotes the standard cross product of vectors in $\RR^{3}$. Therefore the velocity of the solid particle $x$ is given by 
\begin{align}\label{velocitysolid}
h'_{\ep}(t) + \omega_{\ep}(t) \times (x- h_{\ep}(t)), \quad x \in \mathcal{S}^{\ep}(t).
\end{align}

 We assume that the rigid body is homogeneous of density $\rho_\ep$. We denote its total mass by $m_\ep$ so that $m^\ep = \rho_\ep |\mathcal{S}^\ep_0 |$ where $|\mathcal{S}^\ep_0 |$ is the volume of the rigid body. We also introduce  $J^{\ep}$  the matrix of inertia of the rigid body defined by 
\begin{equation*}
(J^\ep a ) \cdot b = \rho_\ep \int_{\mathcal{S}^\ep} \big( a \times( x - h_\ep(t)\big) \cdot \big( b \times( x - h_\ep(t)   \big)  dx,\;\; \text{for any} \;\; a,b \in \RR^3, 
\end{equation*}
(see \cite{gunzburger_global_2000}).

We assume that the fluid is governed by the classical Navier-Stokes equations with no-slip boundary conditions on the boundary of the rigid body, and the dynamics of the rigid body is described by the equations of the balance of linear and angular momentum. 
We  suppose that the fluid is homogeneous of constant density $1$ to simplify the notations and we denote by $u_{\ep}(t,x)$ the velocity of the fluid and by $p_{\ep}(t,x)$ the  pressure of the fluid. Moreover, we also denote by $\Sigma (u_\ep, p_\ep)$ the stress tensor of the fluid 
$$
\Sigma (u_\ep, p_\ep) = 2 \nu D(u_\ep) - p_\ep I_{3},
$$ 
where $I_{3}$ is identity matrix of order $3$ and $D(u_\ep)$ is the deformation tensor
\begin{equation}\label{symmetric}
D(u_\ep):=\frac{1}{2}\bigl(\frac{\partial u_{\ep,i}}{\partial x_{j}}+ \frac{\partial u_{\ep,j}}{\partial x_{i}} \bigr)_{i,j} \quad i,j= 1,2,3. 
\end{equation}

With the notation introduced above, we have the following mathematical formulation for the fluid-rigid body system (see \cite{desjardins_existence_1999}, \cite{gunzburger_global_2000} and \cite{serre_chute_1987}):
\begin{itemize}
\item Fluid equations:
\begin{flalign}{\label{ns-ob1}}
\begin{split}
\frac{\partial u_{\ep}}{\partial t} +(u_{\ep} \cdot \nabla )u_{\ep}- \nu \Delta u_{\ep} + \nabla p_{\ep} =0 \;\; \text{for}\; t \in (0, +\infty),\; x \in \mathcal{F}^{\ep}(t). \\
\dive u_{\ep} = 0   \;\; \text{for}\; t \in (0, +\infty),\; x \in \mathcal{F}^{\ep}(t).
\end{split}
\end{flalign}
\item Rigid body equations:
\begin{align}
m^{\ep} h''_{\ep}(t) = - \int_{\partial \mathcal{S}^{\ep}(t)} \Sigma(u_\ep,p_\ep) n_\ep ds \;\;\; \text{for}\; t \in (0, +\infty).{\label{solid1}} \\
(J^{\ep}\omega_{\ep})'(t) = - \int_{\partial \mathcal{S}^{\ep}(t)} (x-h_{\ep}) \times (\Sigma (u_\ep,p_\ep) n_\ep) ds \;\;\; \text{for}\; t \in (0, +\infty). {\label{solid2}}
\end{align}
\item Boundary conditions:
\begin{flalign}\label{boundarycon1}
\begin{split}
u_{\ep}(t,x)= h_{\ep}'(t)+ \omega_{\ep}(t) \times \big(x-h_{\ep}(t)\big), \; \text{for}\; t \in (0, +\infty), x \in \partial \mathcal{S}^{\ep}(t).\\
\lim_{|x|\rightarrow \infty} u_{\ep}(t,x) =0 \;\;\; \text{for}\; t \in [0, +\infty).
\end{split}
\end{flalign}
\end{itemize}

In the above system, we have denoted by $n_{\ep}(t,x)$ the unit normal vector to $\partial\mathcal{S}^{\ep}$ pointing outside the fluid domain $\mathcal{F}^{\ep}$. The first line in \eqref{boundarycon1} is the Dirichlet boundary condition: the fluid velocity and the solid velocity must agree on the boundary of the body.

The system \eqref{ns-ob1}-\eqref{boundarycon1} should be completed by some initial conditions. As mentioned at the beginning, we assume that the initial position of the center of mass of the rigid body is in the origin. We denote by $\ue^0$ the initial fluid velocity:
\begin{align}{\label{initialcon1}}
 u_{\ep}(0, x) = \ue^0, \quad
h_{\ep} (0)=0, \quad 
h'_{\ep} (0) = \lep^0, \quad
\ome(0) = \ome^0.
\end{align}

The coupled system satisfies some $L^2$ energy estimates at least at the formal level. Taking the inner product of \eqref{ns-ob1} with $\ue$, integrating the result by parts and using the equations \eqref{solid1} and \eqref{solid2}, we get the following energy estimate (see \cite{gunzburger_global_2000}):
\begin{multline}\label{energinequality}
\|u_\ep(t)\|_{L^2(\mathcal{F}^{\ep}(t))}^2 + m^\ep |h'_\ep(t)|^2 + \left(J^\ep \ome(t) \right) \cdot \ome(t) + 4\nu\int_0^t \|D(u_\ep)\|_{L^2(\mathcal{F}^{\ep}(t))}^2 \\
\leq \|\ue^0\|_{L^2(\mathcal{F}^{\ep}_0)}^2 + m_\ep |\lep^0|^2 + \left(J^\ep \ome^0 \right) \cdot \ome^0.
\end{multline}
The sum of the first three terms on the left-hand side of \eqref{energinequality} is called the kinetic energy of the system at time $t$, while the forth term is called viscous dissipation. Obviously the initial kinetic energy is the right-hand side of \eqref{energinequality}.

Over the last few years, there were a lot of works dealing with the well-posedness of the fluid-rigid body system by using energy estimates. Both weak finite energy solutions (Leray solutions) and strong $H^1$ solutions were constructed. When the fluid is enclosed in a bounded region, the existence of solutions is proved under  some constraints on the collisions between the rigid body and the boundary of the domain. When the  domain of motion is the whole of $\RR^3$ there is of course no such constraint. We give some references below but we would like to say that this list is not exhaustive.
The existence of weak Leray solutions have been proved in \cite{conca_existence_2000},  \cite{desjardins_weak_2000}, \cite{gunzburger_global_2000} and \cite{serre_chute_1987} (see also the references therein). 
We refer to \cite{cumsille_wellposedness_2008}, \cite{desjardins_existence_1999} and \cite{galdi_strong_2002} for results about strong $H^1$ solutions. The vanishing viscosity limit was considered in \cite{sueur_kato_2012}. Let us also mention that the case of the dimension two was also considered in the literature, see for example \cite{desjardins_existence_1999}, \cite{hoffmann_motion_1999}, \cite{san_martin_global_2002} and \cite{takahashi_global_2004}.

The initial conditions should satisfy the following compatibility conditions (see \cite{desjardins_existence_1999}):
\begin{equation} {\label{inicon}}  
\begin{aligned}  
& \ue^0 \in L^{2}(\mathcal{F}^\ep_0), \quad  \dive \ue^0=0\;\; \text{in} \;\; \mathcal{F}^\ep_0 ,\\
& \ue^0 \cdot n_\ep = (\lep^0 + \ome^0 \times x ) \cdot n_\ep \;\; \text{on} \;\; \partial \mathcal{S}_{0}^{\ep}.
\end{aligned}
\end{equation}
The second condition above is a weak version of the Dirichlet boundary condition in which only the normal components of the fluid velocity and of the solid velocity must agree on the boundary of the obstacle. This is in agreement with the usual theory of Leray solutions of the Navier-Stokes equations where the initial velocity is assumed to be only tangent to the boundary. 

Before stating a result of existence of weak solutions for the motion of a rigid body in a fluid, let us introduce the global density and the global velocity, defined on the whole of $\RR^3$. The fluid is homogeneous of constant density $1$ while the rigid body is of density $\rho_\ep$, so we can define the global density  $\widetilde{\rho}_{\ep}(t,x)$ as follows:
\begin{equation*}
\widetilde{\rho}_{\ep}(t,x) = \chi_{\mathcal{F}^\ep(t)}(x) + \rho_\ep \chi_{\mathcal{S}^\ep(t)}(x), \;\; \text{for}\;\;  x \in \RR^3
\end{equation*}
where we denote $\chi_A$ denotes the characteristic function of the set $A$. 
Moreover, recalling the formula for the  velocity of the rigid body, see \eqref{velocitysolid}, one may define a global velocity $\wep$ by 
\begin{equation*}
\wep(t,x)=
\begin{cases}
\ue(t,x)&\quad\text{if }x\in \mathcal{F}^\ep(t)\\
h'_{\ep}(t)+\omega_{\ep}(t)\times\left(x-h_{\ep}(t) \right) &\quad\text{if }x\in\mathcal{S}^\ep(t). 
\end{cases}
\end{equation*}
Clearly, by conditions \eqref{inicon}, we know that 
\begin{equation*}
\wep^0 \in L^{2}(\RR^{3}), \quad \dive \wep^0 = 0\;\; \text{in} \;\; \RR^{3}.\\
\end{equation*}

Motivated by the energy estimates \eqref{energinequality} and by the construction of $\widetilde{\rho}_{\ep}$ and $\wep$ we introduce the following notion of weak solution (see \cite{conca_existence_2000},  \cite{desjardins_weak_2000}, \cite{gunzburger_global_2000} and \cite{serre_chute_1987}).
\begin{defi}{\label{definitionweaksol}}
A triplet $(\wep, h_{\ep}, \omega_{\ep})$ is a weak Leray solution of \eqref{ns-ob1}--\eqref{initialcon1}, if
\begin{itemize}
\item $\wep, h_\ep, \ome$ satisfying
\begin{equation*}
h_\ep \in W^{1,\infty}(\RR_+; \RR^3),\quad  \ome \in L^{\infty} (\RR_+; \RR^3),
\end{equation*}
\begin{equation*}
\ue\in L^{\infty}\left(\RR_+; L^{2}(\mathcal{F}^\ep)\right) \cap L^{2}\loc \left(\RR_+; H^{1}(\mathcal{F}^\ep)\right),\quad \wep\in C^0_w\left(\RR_+; L^{2}(\RR^3)\right);
\end{equation*}
\item $\wep$ is divergence free in the whole of $\RR^3$ with $D\wep(t,x)=0$ in $\mathcal{S}^\ep(t)$;
\item $\wep$ verifies the equation in the following sense:
\begin{multline*}
-\int_{0}^{\infty} \int_{\RR^{3}} \widetilde{\rho}_{\ep} \wep \cdot \big(\pat \varphi_{\ep} + (\wep \cdot \na) \varphi_{\ep} \big) + 2 \nu \int_{0}^{\infty} \int_{\RR^{3}} D (\wep) : D (\varphi_{\ep}) \\
 = \int_{\RR^{3}} \widetilde{\rho}_{\ep} (0) \wep^0 \cdot \varphi_{\ep}(0). 
\end{multline*}
for any test function $\varphi_{\ep} \in W^{1,\infty}\left(\RR_+; H^{1}_{\sigma}(\RR^{3})\right)$, compactly supported in $\RR_+\times\RR^3$  such that $D\varphi_\ep(t,x)=0$ in $\mathcal{S}^\ep(t)$.
\end{itemize}
\end{defi}

One has the following result of existence of weak solutions of the initial-boundary value problem  \eqref{ns-ob1}--\eqref{initialcon1} in the sense defined above (see \cite{conca_existence_2000},  \cite{desjardins_weak_2000}, \cite{gunzburger_global_2000} and \cite{serre_chute_1987}). 

\begin{teo}\label{Existence}
Let $\wep^0\in L^2(\RR^3)$ be divergence free and such that  $D\wep^0=0$ in $\mathcal{S}^\ep_0$. 
Then, there exists at least one  global weak solution $(\wep, h_{\ep}, \omega_{\ep})$ of the initial-boundary value problem \eqref{ns-ob1}--\eqref{initialcon1} in the sense of Definition \ref{definitionweaksol}. Moreover, $\wep$ satisfies the following energy estimate:
\begin{align}{\label{energyineq}}
\int_{\RR^{3}} \widetilde{\rho}_{\ep} |\wep(t)|^{2}  + 4 \nu \int_{0}^{t} \int_{\RR^{3}} |D (\wep)|^{2} \leq \int_{\RR^{3}} \widetilde{\rho}_{\ep}(0) |\wep^0|^{2} \quad \forall t>0.
\end{align}
\end{teo}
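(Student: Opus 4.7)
The strategy is a penalization method. For a parameter $\delta>0$ I would introduce a regularized problem on the whole of $\RR^3$ in which the rigid body is replaced by a highly viscous fluid: one seeks a divergence-free $\wep^\delta\in L^\infty\loc(\RR_+;L^2(\RR^3))\cap L^2\loc(\RR_+;H^1\loc)$ satisfying a variable-density Navier-Stokes system on $\RR^3$ with dissipation enhanced by an extra factor $1/\delta$ on a transported approximate solid region $\mathcal{S}^\ep_\delta(t)$. Existence of global weak solutions at fixed $\delta$ is obtained by a Galerkin procedure on divergence-free fields combined with the standard Aubin-Lions compactness; the transport equations for the density and for the characteristic function of the approximate solid are handled by DiPerna-Lions renormalization, since the velocity lies in $L^2_t H^1_x$.

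Uniform in $\delta$ energy estimates analogous to \eqref{energinequality} give $\wep^\delta$ bounded in $L^\infty\loc(\RR_+;L^2(\RR^3))\cap L^2\loc(\RR_+;H^1\loc)$, and the enhanced dissipation forces $D\wep^\delta\to 0$ in $L^2$ on the solid region. Passing to the limit $\delta\to 0$ along a subsequence, weak compactness yields $\wep^\delta\rightharpoonup\wep$; the identity $D\wep=0$ on the limiting solid region, combined with the classical characterization of rigid motions, produces $h_\ep$ and $\ome$ such that $\wep(t,x)=h'_\ep(t)+\ome(t)\times(x-h_\ep(t))$ there. Strong $L^2\loc$ convergence of $\wep^\delta$, needed to pass to the limit in the convection term, follows from Aubin-Lions after bounding $\pat(\widetilde{\rho}_\ep^\delta\wep^\delta)$ in a negative Sobolev norm using only divergence-free test functions so that the pressure drops out. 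DiPerna-Lions gives the strong convergence of $\chi_{\mathcal{S}^\ep_\delta(t)}$ to $\chi_{\mathcal{S}^\ep(t)}$. One then passes to the limit in the weak formulation to recover Definition \ref{definitionweaksol}, and the energy inequality \eqref{energyineq} is obtained by weak lower semicontinuity of the kinetic energy and the dissipation term, combined with convergence of the initial data.

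The main obstacle is to show that the limit solid domain $\mathcal{S}^\ep(t)$ is truly a rigid displacement of $\mathcal{S}^\ep_0$ rather than some deformed shape: at the approximate level the penalty controls only $D\wep^\delta$ in $L^2$ and does not enforce pointwise rigidity, so one must carefully combine the strong convergence of the characteristic functions with the fact that the limit velocity is a genuine rigid motion on the limit set to conclude that this set is transported rigidly and hence remains congruent to $\mathcal{S}^\ep_0$. A secondary difficulty is the compatibility of the test function class in Definition \ref{definitionweaksol}, which requires $D\varphi=0$ on the \emph{exact} moving solid region $\mathcal{S}^\ep(t)$: this is handled by a density argument in the space of admissible test functions, exploiting the uniform $H^1\loc$ control on $\wep^\delta$ to transfer admissibility from the approximate to the limiting geometry.
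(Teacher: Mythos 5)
First, a remark on the comparison you asked for: the paper does not prove Theorem \ref{Existence} at all --- it is quoted from the literature (Conca--San Mart\'in--Tucsnak, Desjardins--Esteban, Gunzburger--Lee--Seregin, Serre), so there is no in-paper proof to measure your plan against. Your penalization scheme is a recognizable sketch of one of the standard routes used in those references (replace the solid by a fluid with dissipation amplified by $1/\delta$ on a transported approximate solid region, pass to the limit $\delta\to0$), so the overall strategy is sound.

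There is, however, a genuine gap in the compactness step as you describe it. You propose to bound $\pat(\widetilde{\rho}_{\ep}^{\delta}\wep^{\delta})$ in a negative Sobolev norm ``using only divergence-free test functions so that the pressure drops out'' and then apply Aubin--Lions. But the weak formulation of the penalized problem contains the term $\frac{2}{\delta}\int\chi_{\mathcal{S}^\ep_\delta}D(\wep^{\delta}):D(\varphi)$, and the energy estimate only gives $\frac{1}{\delta}\int_0^T\|\chi_{\mathcal{S}^\ep_\delta}D(\wep^{\delta})\|_{L^2}^2\leq C$, i.e.\ $\|\chi_{\mathcal{S}^\ep_\delta}D(\wep^{\delta})\|_{L^2_tL^2_x}\leq C\sqrt{\delta}$; against an arbitrary divergence-free $\varphi$ the penalty term is therefore only bounded by $C\delta^{-1/2}\|D\varphi\|_{L^2}$, which blows up as $\delta\to0$. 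So the time-derivative estimate is \emph{not} uniform in $\delta$ on the full space of divergence-free test functions, and the Aubin--Lions argument as stated fails. The standard repair (this is the technical heart of the penalization proofs) is to test only with fields that are rigid on a neighbourhood of the approximate solid --- for which the penalty term vanishes identically --- and to obtain equicontinuity in time of the corresponding projections of $\widetilde{\rho}_{\ep}^{\delta}\wep^{\delta}$, together with a careful construction showing these projections converge as the solid region moves and as $\delta\to0$. Relatedly, the two issues you flag at the end (that the limit region is transported rigidly, and that test functions rigid on the exact $\mathcal{S}^\ep(t)$ can be approximated by admissible ones for the $\delta$-problem) are not side remarks to be ``handled by a density argument'': together with the projection construction above they constitute most of the actual proof, and your plan defers all of them. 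The derivation of \eqref{energyineq} by lower semicontinuity, on the other hand, is fine once strong convergence of $\widetilde{\rho}_{\ep}^{\delta}$ and weak convergence of $\wep^{\delta}$ are in hand.
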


We now let $\ep\to0$ and wish to find the limit of the solution given in Theorem \ref{Existence}. Let us first review the literature available on related results.

In dimension two the literature is richer. Iftimie, Lopes Filho and Nussenzveig Lopes \cite{iftimie_two-dimensional_2006} proved convergence towards the Navier-Stokes equations in $\RR^2$ in the case when the rigid body does not move. Lacave \cite{lacave_two_2009} considered the case of a thin obstacle tending to a curve. Recently, Lacave and Takahashi \cite{lacave_small_2017} considered a small disk moving under the influence of a two-dimensional viscous incompressible fluid. Under the condition that the density of the solid is independent of $\ep$ and assuming that the initial data is sufficiently small, they used the $L^{p}-L^{q}$ decay estimates of the semigroup associated to the fluid-rigid body system to deduce the convergence towards the solution of the Navier-Stokes equations in $\RR^{2}$. In \cite{he_small_2018}, the authors extended the result of  \cite{lacave_small_2017} to the case of arbitrary shape of the body and with no restriction on the size of the initial data but assuming that the  density of the obstacle is large. 

In  dimension three, Iftimie and Kelliher \cite{iftimie_remarks_2009} considered the case of a fixed obstacle and proved convergence towards the Navier-Stokes equations in $\RR^3$.  Lacave \cite{lacave_3d_2015} considered more general shrinking obstacles (for instance shrinking to a curve) but still fixed. When the obstacle is moving with the fluid, the limit $\ep\to0$ was considered in \cite{silvestre_motion_2014} in the case when the rigid body is a ball. Unfortunately, the elliptic estimates in that paper, see \cite[Theorem 3.1]{silvestre_motion_2014}, are not correct as was observed in \cite[Subsection 2.1]{chipot_limits_2014-1}.

As far as we know, Theorem \ref{th1} below is the first result on the limit $\ep\to0$ in dimension three for a moving obstacle. We will essentially show that if  the density of the rigid body goes to infinity, then the energy estimates are sufficient to pass to the limit in the weak formulation by using a truncation procedure. We obtain then the convergence of the solutions constructed in Theorem \ref{Existence} to a solution of the Navier-Stokes equations in $\RR^3$ under the assumption that the initial data $\wep^0$ is bounded in $L^2$. We do not need to impose any small data condition or any restriction on the shape of the body.

Let us now state  the main result of this paper.
\begin{teo}{\label{th1}}
Let $\wep^0\in L^2(\RR^3)$ be divergence free and such that  $D\wep^0=0$ in $\mathcal{S}^\ep_0$. We assume that
\begin{itemize}
\item   $ \Sep_0 \subset B(0,\ep)$;
\item the mass $m^\ep$ of the rigid body satisfies that 
\begin{equation}\label{hymass}
\frac{m^\ep}{\ep^3} \to \infty \quad \text{as } \ep \to 0; 
\end{equation}
\item $\wep^0$ converges weakly in $L^2(\RR^3)$ towards some $u_0$.
\item $\sqrt{m^\ep}h'_\ep(0)$ and $\left(J^\ep \ome^0 \right) \cdot \ome^0$ are bounded uniformly in $\ep$.
\end{itemize}
Let $(\wep , h_{\ep}, \omega_{\ep})$  be the global solution  of the system \eqref{ns-ob1}--\eqref{initialcon1} given by Theorem \ref{Existence}.
Then there exists a subsequence $\wepk$ of $\wep$ which converges
 $$\wepk \rightharpoonup u \quad \text{weak$\ast$ in }  L^\infty\left(\RR_+;L^2(\RR^{3})\right)\text{ and weakly in } L^2\loc \left(\RR_+;H^1(\RR^{3})\right)$$
towards a solution $u$ of the Navier-Stokes equations in $\RR^3$ in the sense of distributions with initial data $u_0(x)$.

Moreover, suppose in addition that $\wep^0$ converges strongly in $L^2(\RR^3)$ to $u_0(x)$  and that both  $\sqrt{m^\ep}h'_\ep(0)$ and $\left(J^\ep \ome^0 \right) \cdot \ome^0$ converge to 0 as $\ep\to0$. Then the limit solution $u$ satisfies the  energy estimate 
\begin{equation*}
\forall t\geq0\quad \norm{u(t)}_{L^{2}(\RR^{3})}^{2}+4\nu \int_{0}^{t} \norm{D(u)}^{2}_{L^{2}(\RR^{3})} \leq \norm{u_0}^{2}_{L^{2}(\RR^{3})}. 
\end{equation*}
\end{teo}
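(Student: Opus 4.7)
The plan is to combine uniform a priori estimates with a carefully designed test-function construction, and then exploit the scaling $m^\ep/\ep^3\to\infty$ to handle the terms carried by the shrinking solid region. From the energy inequality \eqref{energyineq} and the hypotheses, the initial kinetic energy $\|\wep^0\|_{L^2}^2+m^\ep|\lep^0|^2+(J^\ep\omega^0)\cdot\omega^0$ is uniformly bounded, so $\wep$ is uniformly bounded in $L^\infty(\RR_+;L^2(\RR^3))$ and $D(\wep)$ in $L^2\loc(\RR_+;L^2(\RR^3))$; Korn's inequality on divergence-free fields in $\RR^3$ upgrades this to a uniform bound on $\wep$ in $L^2\loc(\RR_+;H^1(\RR^3))\hookrightarrow L^2\loc(\RR_+;L^6(\RR^3))$. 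In addition $\sqrt{m^\ep}\,|h'_\ep(t)|\leq C$, so $|h_\ep(t)|\leq Ct/\sqrt{m^\ep}$ and $\mathcal{S}^\ep(t)\subset B(h_\ep(t),\ep)$. A diagonal extraction produces a subsequence $\wepk\rightharpoonup u$ in the stated topologies; the task is then to identify $u$ as a weak solution of the Navier--Stokes equations in $\RR^3$.

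Given any divergence-free $\varphi\in C^\infty_c(\RR_+\times\RR^3)$ I would write $\varphi=\curl\psi$ for a smooth compactly supported vector potential, then gauge-adjust it by $\widetilde\psi(t,x)=\psi(t,x)-\psi(t,h_\ep(t))$, so that $\widetilde\psi(t,h_\ep(t))=0$ while $\curl\widetilde\psi=\varphi$. Let $\chi_\ep(t,x)=\chi(|x-h_\ep(t)|/\ep)$ with a fixed smooth cutoff $\chi$ equal to $1$ on $[0,1]$ and $0$ on $[2,\infty)$, and set
$$
\varphi_\ep=\curl\bigl((1-\chi_\ep)\widetilde\psi\bigr).
$$
Then $\varphi_\ep$ is divergence-free, agrees with $\varphi$ outside $B(h_\ep(t),2\ep)$, vanishes identically on $B(h_\ep(t),\ep)\supset\mathcal{S}^\ep(t)$, and its space and time derivatives also vanish there, so $\varphi_\ep$ is admissible in Definition \ref{definitionweaksol}. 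The gauge bound $|\widetilde\psi|\lesssim\ep$ together with $|\nabla^k\chi_\ep|\lesssim\ep^{-k}$ on the annulus $A_\ep(t)=\{\ep<|x-h_\ep(t)|<2\ep\}$ of volume $\lesssim\ep^3$ yields $\|\varphi_\ep-\varphi\|_{L^\infty_tH^1_x}=O(\sqrt\ep)$; in particular $D\varphi_\ep\to D\varphi$ strongly in $L^2_{t,x}$ and $\varphi_\ep(0)\to\varphi(0)$ strongly in $L^2$.

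Plugging $\varphi_\ep$ into the weak formulation annihilates all contributions from $\mathcal{S}^\ep(t)$, and the linear viscous and initial-data terms pass to the limit by weak-strong convergence. The main obstacle is $\int\wep\cdot\partial_t\varphi_\ep$: the difference $\partial_t\varphi_\ep-\partial_t\varphi$ contains a piece $-(h'_\ep\cdot\nabla_x\chi_\ep)\varphi$ of size $|h'_\ep|/\ep\leq C/(\ep\sqrt{m^\ep})$ on $A_\ep$, which does \emph{not} vanish in $L^2_{t,x}$; however when paired with $\wep\in L^2_tL^6_x$, H\"older and Sobolev applied on $|A_\ep|\lesssim\ep^3$ yield
$$
\Bigl|\int_0^T\!\!\int_{\RR^3}\wep\cdot(\partial_t\varphi_\ep-\partial_t\varphi)\Bigr|\lesssim \sqrt{\ep^3/m^\ep}\longrightarrow 0,
$$
which is exactly where the hypothesis $m^\ep/\ep^3\to\infty$ enters. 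For the convection term, the contribution on $A_\ep\cup B(h_\ep(t),\ep)$ is controlled by the same type of Sobolev bound and vanishes, while on the open fluid region (the complement of the body in $\operatorname{supp}\nabla\varphi$), $\wep$ solves the classical Navier--Stokes system; using $\partial_t\wep\in L^{4/3}_tH^{-1}\loc$ there together with the $L^2_tH^1$ bound, Aubin--Lions gives the strong $L^2\loc$ convergence of $\wep$ needed to pass to the limit in $\int\wep\otimes\wep:\nabla\varphi$.

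Finally, under the strengthened hypotheses of the second part of the statement, the initial kinetic energy converges to $\|u_0\|_{L^2}^2$, since $\wep^0\to u_0$ strongly in $L^2$ while $m^\ep|\lep^0|^2\to 0$ and $(J^\ep\omega^0)\cdot\omega^0\to 0$. The claimed energy inequality for $u$ then follows from the weak lower semicontinuity of $u\mapsto\|u\|_{L^2(\RR^3)}^2$ and of $u\mapsto\int_0^t\|D(u)\|_{L^2}^2$ applied to the left-hand side of \eqref{energyineq}.
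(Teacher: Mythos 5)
Your construction of the cut-off test function is essentially the paper's: the stream function is gauge-shifted so that it vanishes at $h_\ep(t)$, the cut-off acts at scale $\ep$, and $\|\varphi_\ep-\varphi\|_{L^\infty_tH^1_x}=O(\ep^{1/2})$. Your treatment of the time-derivative term is also correct and matches the paper's: the dangerous pieces are of size $|h'_\ep|/\ep$ supported on an annulus of volume $O(\ep^3)$, and pairing them with $\wep\in L^2_tL^6_x$ produces the factor $\ep^{3/2}|h'_\ep|\lesssim\sqrt{\ep^3/m^\ep}\to0$, which is exactly where \eqref{hymass} enters (the paper packages this as the hypothesis $\ep^{3/2}h'_\ep\to0$ in its Theorem \ref{th2}). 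The reduction that the density $\widetilde\rho_\ep$ disappears because $\varphi_\ep$ vanishes on the solid is also the paper's reduction of Theorem \ref{th1} to Theorem \ref{th2}.

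The genuine gap is the strong compactness step. You invoke Aubin--Lions from ``$\partial_t\wep\in L^{4/3}_tH^{-1}\loc$ on the open fluid region'', but that region is time-dependent: for a fixed compact set $K$ the ball $B(h_\ep(t),\ep)$ may sweep through $K$ as $t$ varies (under the hypotheses, $|h'_\ep|$ is only $o(\ep^{-3/2})$, so the swept tube is not confined), and during those times no bound on $\partial_t\wep$ in a negative Sobolev space of all of $K$ is available, even after projecting out the pressure; excising the tube produces an $\ep$-dependent domain, which destroys the fixed function-space triple that Aubin--Lions requires. This is precisely the obstruction the paper singles out and resolves in Section \ref{temp}: one defines $\Xi_\ep(t)\in H^{-2}_\sigma$ by $\langle\Xi_\ep(t),\varphi\rangle=\int_{\RR^3}\ve(t)\cdot\varphi_\ep(t)$, i.e.\ one pairs against the \emph{moving} cut-off of each fixed $\varphi$, proves from the equation a uniform $|t-s|^{1/4}$ modulus of continuity of $\Xi_\ep$ in $H^{-2}$, applies Arzel\`a--Ascoli with the compact embedding $H^{-2}\hookrightarrow H^{-3}\loc$, shows $\|\Xi_\ep(t)-\ve(t)\|_{H^{-2}}=O(\ep^{3/2})$, and interpolates with the $L^2_tH^1$ bound to obtain strong convergence in $L^2(0,T;L^2\loc)$. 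Without this (or an equivalent device) your passage to the limit in the convective term is not justified. The same issue resurfaces in your final paragraph: lower semicontinuity of $\|u(t)\|_{L^2}^2$ at each fixed $t$ requires $\wepk(t)\rightharpoonup u(t)$ in $L^2(\RR^3)$ for that $t$ (together with discarding the contribution of the solid region, where $\widetilde\rho_\ep\to\infty$), and this pointwise-in-time weak convergence is again a consequence of the $C^0(0,T;H^{-3}\loc)$ convergence delivered by the Arzel\`a--Ascoli argument, not of weak-$\ast$ convergence in $L^\infty_tL^2_x$ alone.
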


Let us give a few remarks on the hypotheses of the Theorem above. First, if  the rigid body shrinks isotropically to a point, then hypothesis \eqref{hymass} means that the density $\rho_\ep$ of the rigid body tends to infinity as $\ep \to 0$. If the rigid body does not shrink isotropically to a point, then condition \eqref{hymass} is stronger than simply saying that the density of the rigid body goes to infinity. Indeed, since  $ \Sep_0 \subset B(0,\ep)$ we have that $| \Sep_0 |\leq \frac{4\pi}3\ep^3$ so $\rho_\ep=\frac{m^\ep}{| \Sep_0 |}\geq \frac{3m^\ep}{4\pi\ep^3}\to\infty$ as $\ep\to0$.

Next, the weak convergence of  $\wep^0$ in $L^2(\RR^3)$ implies its boundedness in $L^2(\RR^3)$. Together with the hypothesis that  $\sqrt{m^\ep}h'_\ep(0)$ and $\left(J^\ep \ome^0 \right) \cdot \ome^0$ are bounded uniformly in $\ep$ this implies that the right-hand side of \eqref{energinequality} is bounded. Then \eqref{energyineq} implies that $\sqrt{\widetilde{\rho}_{\ep}}\wep$ is bounded in $L^\infty(\RR_+;L^2(\RR^3))$ and $D(\wep)$ is bounded in $L^2(\RR_+;L^2(\RR^3))$. We observed above that $\rho_\ep\to\infty$ so we can assume that $\rho_\ep\geq1$. Then we have that $\widetilde\rho_\ep\geq1$ (recall that $\widetilde\rho_\ep=1$ in the fluid region and $\widetilde\rho_\ep=\rho_\ep$ in the solid region) so $\wep$ is bounded in $L^\infty(\RR_+;L^2(\RR^3))$. We infer that 
\begin{equation}\label{boundl2}
\wep (t,x) \;\text{is bounded in}\; L^{\infty}\left(\RR_+; L^{2}(\RR^3)\right) \cap L^{2}\loc \left(\RR_+; H^{1}(\RR^3)\right)
\end{equation}
and we will see that this is all we need to pass to the limit in our PDE. We  require neither the Dirichlet boundary conditions nor the special form of $\wep$ inside the rigid body. All we need is the above boundedness and the fact that the Navier-Stokes equations are satisfied in the exterior of the ball $B(0,\ep)$. More precisely, we can prove the following more general statement. 
\begin{teo}{\label{th2}}
Let $\ve (t,x ) $  be a divergence free vector field bounded independently of $\ep$ in
\begin{equation*}
 L\loc ^{\infty}(\RR_+; L^{2}(\RR^{3})) \cap L^{2}\loc(\RR_+; H^{1}(\RR^{3}))\cap C^0_w(\RR_+; L^{2}(\RR^{3})).
\end{equation*}
We make the following assumptions:
\begin{itemize}
\item The vector field  $\ve$ verifies the Navier-Stokes equations
  \begin{equation}\label{nsequation}
\partial_t\ve-\nu\Delta\ve+\ve\cdot\nabla\ve=-\nabla\pi_\ep 
  \end{equation}
in the exterior of the ball $B(h_\ep(t),\ep)$ with initial data  $\ve(0,x)$ in the following sense:
\begin{equation*}
-\int_{0}^{\infty} \int_{\RR^{3}} \ve \cdot \pat \varphi + \nu \int_{0}^{\infty} \int_{\RR^{3}} \na \ve : \na \varphi + \int_{0}^{T} \int_{\RR^{3}} \ve \cdot \na \ve \cdot \varphi  = \int_{\RR^{3}} \ve(0) \cdot \varphi(0)
\end{equation*}
for every test function $\varphi\in W^{1,\infty}\bigl(\RR_+\times \RR^3\bigl)$ which is divergence free, compactly supported in $\RR_+\times \RR^3$ and such that for all $t$ the function $x\mapsto \varphi(t,x)$ is smooth and compactly supported in the set $\{|x-h_\ep(t)|>\ep\}$.
\item The initial data $\ve(0,x)$ is divergence free, square integrable and converges weakly to some $v_0(x)$  in $L^2(\RR^3)$.
\item The center of the ball verifies $h_\ep\in \Lip(\RR_+;\RR^3)$ and $\ep^\frac{3}{2} h'_\ep(t)\to 0 $ strongly in $L^\infty\loc (\RR_+)$ when $\ep \to 0$.
\end{itemize}
Then there exists a subsequence of $\ve$ which converges weak$\ast$ in $L^{\infty}\loc(\RR_+; L^{2}(\RR^{3}))$ and weakly in  $L^{2}\loc(\RR_+; H^{1}(\RR^{3}))$ to a solution $v$ of the Navier-Stokes equations in $\RR^3$ in the sense of distributions with initial data $v_0(x)$. 

Moreover, if we assume in addition that $\ve(0,x)$ converges strongly in $L^2$ to $v_0(x)$ and that the following energy estimate holds true for $\ve$
\begin{equation}\label{ineqvep}
\forall t\geq0\quad \norm{\ve(t)}_{L^{2}(\RR^3\setminus B(h_\ep(t),\ep))}^{2}+4\nu \int_{0}^{t} \norm{D(\ve)}^{2}_{L^{2}(\RR^3\setminus B(h_\ep(t),\ep))} \leq \norm{\ve(0)}^{2}_{L^{2}(\RR^3)} + o(1)
\end{equation}
as $\ep\to0$, then the limit solution $v$ satisfies the following energy estimate 
\begin{align}\label{ineq_vep}
\forall t\geq0\quad \norm{v(t)}_{L^{2}(\RR^{3})}^{2}+4\nu \int_{0}^{t} \norm{D(v)}^{2}_{L^{2}(\RR^{3})} \leq \norm{v_0}^{2}_{L^{2}(\RR^{3})}. 
\end{align}

\end{teo}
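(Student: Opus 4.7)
The plan is to first extract a weak limit $v$, then identify it as a Navier--Stokes solution by passing to the limit in the weak formulation tested against cleverly truncated functions, and finally derive the energy inequality by lower semicontinuity. For the extraction step, Banach--Alaoglu applied to the uniform bounds yields (along a subsequence) $\ve\rightharpoonup v$ weak-$*$ in $L^\infty\loc(\RR_+;L^2(\RR^3))$ and weakly in $L^2\loc(\RR_+;H^1(\RR^3))$; passing to the limit in $\dive\ve=0$ yields $\dive v=0$, and the weak continuity in time together with the assumed convergence of $\ve(0)$ identifies $v(0)=v_0$.

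The heart of the proof is a truncation that yields admissible test functions for the equation of $\ve$. Fix a smooth cutoff $\xi\in C^\infty_c(\RR^3)$ with $\xi\equiv 1$ on $B(0,1)$ and $\xi\equiv 0$ outside $B(0,2)$, and set $\xi_\ep(t,x)=\xi\bigl((x-h_\ep(t))/\ep\bigr)$. Given a divergence-free test function $\varphi\in C^\infty_c(\RR_+\times \RR^3)$, define $\varphi_\ep=(1-\xi_\ep)\varphi+w_\ep$, where $w_\ep(t,\cdot)$ is the Bogovskii correction on the annulus $A_\ep(t)=\{\ep<|x-h_\ep(t)|<2\ep\}$ solving $\dive w_\ep=\na\xi_\ep\cdot\varphi$ with zero boundary values (the compatibility $\int_{A_\ep}\na\xi_\ep\cdot\varphi=0$ is automatic from $\dive\varphi=0$). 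A rescaling to the fixed annulus $\{1<|z|<2\}$ shows the Bogovskii constants are independent of $\ep$, which gives $\|w_\ep\|_{L^2_x}\lesssim\ep^{3/2}$ and $\|\na w_\ep\|_{L^2_x}\lesssim\ep^{1/2}$; identical orders hold for $\xi_\ep\varphi$ and its gradient. By construction $\varphi_\ep$ is divergence-free, compactly supported in $\{|x-h_\ep(t)|>\ep\}$, and $r_\ep:=\varphi-\varphi_\ep$ is $O(\ep^{3/2})$ in $L^\infty(L^2_x)$ and $O(\ep^{1/2})$ in $L^\infty(H^1_x)$.

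Substituting $\varphi_\ep=\varphi-r_\ep$ in the weak formulation reduces the task to showing that the four error contributions generated by $r_\ep$ vanish as $\ep\to 0$. The dissipation and initial-data errors are controlled by the $H^1_x$-smallness of $r_\ep$ combined with the uniform bounds on $\ve$, while the nonlinear error uses H\"older and the small volume $|B(h_\ep,2\ep)|\sim\ep^3$ together with $\ve\in L^2\loc(L^6)$. The delicate error is the time-derivative $\int\ve\cdot\pat r_\ep$. Writing $\pat\xi_\ep=-h'_\ep\cdot\na_x\xi_\ep$ and integrating by parts in $x$ --- legitimate because $h'_\ep$ is independent of $x$ --- recasts $\int\ve\cdot(\pat\xi_\ep)\varphi$ as $h'_\ep\cdot\int\xi_\ep\,\na(\ve\cdot\varphi)$, bounded by $|h'_\ep|\|\xi_\ep\|_{L^2_x}\|\na(\ve\cdot\varphi)\|_{L^2_x}\lesssim\ep^{3/2}|h'_\ep|\cdot C(\varphi)\bigl(\|\ve\|_{L^\infty_t L^2_x}+\|\na\ve\|_{L^2_{t,x}}\bigr)$. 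The hypothesis $\ep^{3/2}h'_\ep\to 0$ in $L^\infty\loc$ is precisely what makes this tend to zero after integration in $t$. The contribution from $\pat w_\ep$ is handled by the same trick after expanding $w_\ep(t,x)=W\bigl(t,(x-h_\ep(t))/\ep\bigr)$: the transport piece $h'_\ep\cdot\na_x w_\ep$ yields to the same integration by parts, and the data piece is controlled by the Bogovskii estimate applied to the $t$-derivative of $\na\xi\cdot\varphi(t,h_\ep(t)+\ep z)$ in the rescaled annulus, using again $\ep^{3/2}h'_\ep\to 0$.

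The linear terms in the weak formulation pass to the limit directly, but the nonlinear term $\int\ve\cdot\na\ve\cdot\varphi$ requires strong convergence of $\ve$ in $L^2\loc(\RR_+\times\RR^3)$. I would obtain this via an Aubin--Lions argument: testing the equation against the truncated test functions of the previous step yields uniform equicontinuity in $t$ of $\int\ve(t)\cdot\psi$ for every fixed divergence-free $\psi\in C^\infty_c(\RR^3)$, which combined with the $H^1_x$ bound gives strong compactness in $L^2\loc$. Finally, the energy estimate \eqref{ineq_vep} for $v$ is deduced from \eqref{ineqvep} by weak lower semicontinuity of the $L^2$ norm and of the dissipation under weak convergence (using that the hole $B(h_\ep(t),\ep)$ has shrinking volume), together with the assumed strong convergence $\ve(0)\to v_0$. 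The principal obstacle throughout is the analysis of $\int\ve\cdot\pat r_\ep$, which is what forces the precise rate $\ep^{3/2}h'_\ep\to 0$ required in the hypotheses.
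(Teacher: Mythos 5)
Your architecture coincides with the paper's (truncated divergence-free test functions vanishing near $B(h_\ep(t),\ep)$ with $O(\ep^{3/2})$ error in $L^2_x$ and $O(\ep^{1/2})$ in $H^1_x$; the moving-cutoff time derivative absorbed by $\ep^{3/2}h'_\ep\to0$; an Ascoli/Aubin--Lions step for strong $L^2\loc$ compactness; lower semicontinuity for the energy inequality), but the key construction is genuinely different. The paper restores the divergence-free condition by writing $\varphi=\curl\psi$ and setting $\varphi_\ep=\curl(\eta_\ep\psi_\ep)$, where the stream function is renormalized as $\psi_\ep(t,x)=\psi(t,x)-\psi(t,h_\ep(t))$ so that $\|\psi_\ep\|_{L^\infty(B(h_\ep,2\ep))}=O(\ep)\|\varphi\|_{H^2}$; all error terms are then explicit products of $\na\eta_\ep$, $\pat\eta_\ep$, $\psi_\ep$, $\pat\psi_\ep$. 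You instead use a scalar cutoff plus a Bogovsk\u{\i}i corrector $w_\ep$ on the annulus, and your scaling analysis of the corrector ($\|w_\ep\|_{L^2}\lesssim\ep^{3/2}$, $\|\na w_\ep\|_{L^2}\lesssim\ep^{1/2}$, compatibility $\int_{A_\ep}\na\xi_\ep\cdot\varphi=0$ from $\dive\varphi=0$) is correct and yields the same orders. The stream-function route is more elementary (no Bogovsk\u{\i}i theory) and makes the time derivative completely explicit, at the price of being tied to the 3D curl structure; your route is more standard in homogenization and more flexible, at the price of two technical points you should make explicit: (a) the corrector must be smooth and compactly supported in the \emph{open} set $\{|x-h_\ep(t)|>\ep\}$ to be admissible, which requires choosing the cutoff radii strictly inside $(\ep,2\ep)$ (as the paper does with $\tfrac32\ep$ and $2\ep$) and invoking the regularity of the Bogovsk\u{\i}i operator on the annulus as a union of star-shaped domains; and (b) differentiating the corrector in time requires checking that $\pat G$ still has zero mean (it does, since $\int_{A}G(t,\cdot)=0$ for all $t$) before applying the Bogovsk\u{\i}i bound to it. Two further small remarks: your equicontinuity is literally obtained for $t\mapsto\int\ve(t)\cdot\psi_\ep(t)$ rather than $\int\ve(t)\cdot\psi$, so one should either apply Ascoli to the former and transfer via the uniform $O(\ep^{3/2})$ discrepancy (this is exactly the paper's two-step argument with $\Xi_\ep$), and for the energy inequality one needs the resulting \emph{uniform-in-time} convergence to get $\ve(t)\rightharpoonup v(t)$ in $L^2$ for every fixed $t$, together with the weak $L^2$-continuity in time of the limit $v$ so that $v(t)$ is defined for all $t$; both are available from your construction but deserve a sentence.
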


Let us observe that Theorem \ref{th1} follows from Theorem \ref{th2} applied for $\ve (t,x)= \wep (t,x) $. Indeed, from \eqref{boundl2} we have that $\ve$ is bounded in $L\loc ^{\infty}(\RR_+; L^{2}(\RR^{3})) \cap L^{2}\loc(\RR_+; H^{1}(\RR^{3}))$. 
Next we obviously have that $\wep$ verifies the Navier-Stokes equations in the exterior of the ball $B(h_\ep(t),\ep)$ and so does $\ve$. We observed that the right-hand side of \eqref{energinequality} is bounded so $\sqrt{m^\ep}h'_\ep$ is bounded. From \eqref{hymass} we infer that  $\ep^\frac{3}{2} h'_\ep(t)\to 0 $ strongly in $L^\infty(\RR_+)$ when $\ep \to 0$. Then all the hypothesis of the first part of Theorem \ref{th2} is verified and the first part of Theorem \ref{th1} follows. 

Let us now assume in addition the $L^2$ strong convergence of $\wep^0$ towards $u_0$ and let us prove \eqref{ineqvep}. We have the $L^2$ strong convergence of $\ve(0,x)$ to $v_0$. Recalling that the matrix of inertia is non-negative we can ignore the second and the third terms in \eqref{energinequality} to estimate 
\begin{align*}
\|\ve(t)\|_{L^2(\RR^3\setminus B(h_\ep(t),\ep))}^2 +& 4\nu\int_0^t \|D(\ve)\|_{L^2(\RR^3\setminus B(h_\ep(t),\ep))}^2\\
&=\|u_\ep(t)\|_{L^2(\RR^3\setminus B(h_\ep(t),\ep))}^2 + 4\nu\int_0^t \|D(u_\ep)\|_{L^2(\RR^3\setminus B(h_\ep(t),\ep))}^2\\
&\leq \|u_\ep(t)\|_{L^2(\mathcal{F}^{\ep}(t))}^2 + 4\nu\int_0^t \|D(u_\ep)\|_{L^2(\mathcal{F}^{\ep}(t))}^2 \\
&\leq \|\ue^0\|_{L^2(\mathcal{F}^{\ep}(t))}^2 + m_\ep |\lep^0|^2 + \left(J^\ep \ome^0 \right) \cdot \ome^0\\
&\leq\|\ve(0)\|_{L^2(\RR^3)}^2 + m_\ep |\lep^0|^2 + \left(J^\ep \ome^0 \right) \cdot \ome^0.
\end{align*}
By hypothesis $m_\ep |\lep^0|^2 + \left(J^\ep \ome^0 \right) \cdot \ome^0\to0$ so  \eqref{ineqvep} follows.

The passing to the limit stated in Theorem \ref{th2} uses the boundedness of  $\ve$ in the energy space $ L^{\infty}\left(\RR_+; L^{2}(\RR^{3})\right) \cap L^{2}\loc\left(\RR_+; H^{1}(\RR^{3})\right)$ and the construction of a cut-off $\varphi_\ep$  supported in the exterior of the ball $\overline{B}(h_\ep(t),\ep)$. We multiply \eqref{nsequation} with the cut-off $\varphi_{\ep}$, and then pass to the limit by means of classical compactness methods. The main obstruction is that, when the rigid body moves under the influence of the fluid, not only the velocity depends on time, but also the cut-off function. Time derivative estimates for $\ve$ are not easy to obtain and, once obtained, it is not easy to pass to the limit in the term with the time derivative.

The paper is organized as follows. In Section \ref{sectnot}, we introduce some notation and present some preliminary results. The construction of the cut-off near the rigid body is given in Section \ref{defcutof}. We show the strong convergence by means of temporal estimates in Section \ref{temp} and pass to the limit to conclude our proof in Section \ref{paslim}.

\section{Notation and Preliminary results}
\label{sectnot}
In this section, we will introduce some notations and preliminary results.

For a sufficiently regular vector field $u: \RR^3 \to \RR^3$, we denote by $\na u$ the second order tensor field whose components $(\na u)_{ij}$ are given by $\partial u_{j}/\partial x_{i}$, and by $D(u)$ the symmetric part of $\na u$ (see \eqref{symmetric}). The double dot product $M:N$ of two matrices $M=(m_{ij})$ and $N=(n_{ij})$ denotes the quantity $\sum_{i,j}m_{ij}n_{ij}$. 

For function spaces, we shall use standard notations $L^{p}$ and $H^{m}$ to denote the usual Lebesgue and Sobolev spaces. $C^{m}_{b}$ denotes the set of bounded functions whose first $m$ derivatives are bounded functions.  We add subscripts $0$ and $\sigma$ to these  spaces to specify that their elements are compactly supported and divergence free, respectively. For instance, the notation $C^\infty_{0,\sigma}$ defines the space of smooth, compactly supported and divergence free vector fields on $\RR^{3}$. 

In addition, unless we specify the domain,  all function spaces and norms are considered to be taken on $\RR^{3}$ in the $x$ variable. For the $t$ variable, we use the notation $\RR_+ = [0, \infty)$ and emphasize that the endpoint $0$ belongs to $\RR_+$. 
Throughout this article, we denote by  $C$ a generic constant whose value can change from one line to another.

Let  $\varphi\in C^1_b (\RR_+;C^\infty_{0,\sigma})$. The stream function $\psi$ of $\varphi$ is defined by the following formula:
\begin{equation*}
\psi(x)=-\int_{\RR^{3}} \frac{x-y}{4\pi |x-y| ^{3}} \times \varphi(y) dy
\end{equation*}
where $\times$ denotes the standard cross product of vectors in $\RR^{3}$.

Because $\varphi$ is divergence free, we have that   $\curl \psi = \varphi$ and $\psi = \curl \Delta^{-1} \varphi$. Furthermore, $\psi$ is smooth, $\psi\in C^1_b (\RR_+;C^\infty)$, and vanishes at infinity. Moreover, we have the following well-known estimate: 
\begin{align}\label{var+psi}
\norm{\na \psi(t, \cdot)}_{H^2} \leq C \norm{\varphi (t, \cdot) }_{H^2}.
\end{align} 

In our case, in order to deal with the singularity in $h_\ep$, we need to have a stream function vanishing in $h_\ep$. The stream function $\psi$ defined above has no reason to vanish in $h_\ep$, so we are led to introduce a modified stream function $\psi_\ep$. We define  
\begin{equation}\label{def_psiep}
\psi_\ep(t,x)=\psi(t,x)-\psi(t, h_{\ep}(t)).
\end{equation}

Clearly $\psi_\ep(t,h_\ep(t))=0$. We collect in the following lemma some useful properties of the modified stream function.
\begin{lem}\label{lempsi}
	Let  $\varphi\in C^1_b (\RR_+;C^\infty_{0,\sigma})$ and define the  modified stream function $\psi_\ep$ as in \eqref{def_psiep}. We have that:
	\begin{enumerate}[(i)]
		\item $\psi_\ep\in \Lip (\RR_+;C^\infty)$ and $ \curl \psi_\ep = \varphi$.
		\item There exists a universal constant $C>0$ such that for all $R>0$  we have that 
		\begin{gather}
		\norm{\psi_\ep(t,\cdot)}_{L^\infty(B(h_\ep(t),R))}\leq C  R \norm{\varphi(t,\cdot)}_{H^{2}} \label{psiep2}\\
		\intertext{for all $t\geq0$ and}
		\norm{\partial_t\psi_\ep(t,\cdot)}_{L^\infty(B(h_\ep(t),R))}\leq C \left(   R \norm{\partial_t\varphi(t,\cdot)}_{H^{2}}+ |h_\ep'(t)|\norm{\varphi(t,\cdot)}_{H^{2}} \right)\notag
		\end{gather}
for almost all $t\geq0$.
	\end{enumerate}
\end{lem}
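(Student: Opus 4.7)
My plan is to prove each piece by exploiting two facts: first, that subtracting the constant-in-$x$ value $\psi(t,h_\ep(t))$ does not affect the curl but forces $\psi_\ep$ to vanish at $h_\ep(t)$; second, that Biot--Savart in $\RR^3$ combined with the existing estimate \eqref{var+psi} and the Sobolev embedding $H^2(\RR^3)\hookrightarrow L^\infty(\RR^3)$ controls $\nabla\psi$ pointwise in terms of $\|\varphi\|_{H^2}$. This will make both pointwise bounds direct consequences of a mean-value argument.

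For part $(i)$, I would argue that since $\varphi(t,\cdot)\in C^\infty_{0,\sigma}$ for every $t$, the convolution with the Biot--Savart kernel yields $\psi(t,\cdot)\in C^\infty$ with $\curl\psi=\varphi$ (the divergence-free condition on $\varphi$ is what kills the boundary term in the identity $\curl\curl\Delta^{-1}=-\Delta\Delta^{-1}+\nabla\dive\Delta^{-1}$). Since $\psi(t,h_\ep(t))$ depends only on $t$, we get $\curl\psi_\ep=\curl\psi=\varphi$. For the time regularity, $t\mapsto\psi(t,x)$ is $C^1$ because $\varphi\in C^1_b(\RR_+;C^\infty_{0,\sigma})$, and $h_\ep\in W^{1,\infty}(\RR_+;\RR^3)$, so the composition $t\mapsto\psi(t,h_\ep(t))$ is Lipschitz; therefore $\psi_\ep\in\Lip(\RR_+;C^\infty)$.

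For the spatial estimate in $(ii)$, I would write, for $x\in B(h_\ep(t),R)$,
\begin{equation*}
|\psi_\ep(t,x)|=|\psi(t,x)-\psi(t,h_\ep(t))|\le |x-h_\ep(t)|\,\|\nabla\psi(t,\cdot)\|_{L^\infty}\le R\,\|\nabla\psi(t,\cdot)\|_{L^\infty}.
\end{equation*}
Combining \eqref{var+psi} with the three-dimensional Sobolev embedding $H^2\hookrightarrow L^\infty$ gives $\|\nabla\psi(t,\cdot)\|_{L^\infty}\le C\|\nabla\psi(t,\cdot)\|_{H^2}\le C\|\varphi(t,\cdot)\|_{H^2}$, which yields the first inequality.

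For the time-derivative estimate, I differentiate
\begin{equation*}
\partial_t\psi_\ep(t,x)=\partial_t\psi(t,x)-(\partial_t\psi)(t,h_\ep(t))-\nabla\psi(t,h_\ep(t))\cdot h_\ep'(t),
\end{equation*}
which is defined for almost every $t$. The key observation is that $\partial_t\psi$ is itself the Biot--Savart stream function of $\partial_t\varphi$, so the same mean-value argument as above gives
\begin{equation*}
|\partial_t\psi(t,x)-(\partial_t\psi)(t,h_\ep(t))|\le R\,\|\nabla\partial_t\psi(t,\cdot)\|_{L^\infty}\le CR\,\|\partial_t\varphi(t,\cdot)\|_{H^2}
\end{equation*}
on $B(h_\ep(t),R)$, while the remaining term is bounded by $|h_\ep'(t)|\,\|\nabla\psi(t,\cdot)\|_{L^\infty}\le C|h_\ep'(t)|\,\|\varphi(t,\cdot)\|_{H^2}$. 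Adding the two contributions yields the stated inequality. I do not anticipate any real obstacle; the only mildly delicate point is the justification that $\partial_t$ commutes with the Biot--Savart convolution, which is immediate from dominated convergence because $\varphi\in C^1_b(\RR_+;C^\infty_{0,\sigma})$ has uniformly compactly supported time derivative in each bounded time interval (or, more simply, because linearity of the Biot--Savart operator together with $\partial_t\varphi\in C^0_b(\RR_+;C^\infty_{0,\sigma})$ identifies $\partial_t\psi$ with the stream function of $\partial_t\varphi$).
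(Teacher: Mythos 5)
Your proof is correct and follows essentially the same route as the paper: the identity $\curl\psi_\ep=\curl\psi=\varphi$, the mean-value argument combined with \eqref{var+psi} and the embedding $H^2(\RR^3)\hookrightarrow L^\infty(\RR^3)$ for the spatial bound, and the same three-term decomposition of $\partial_t\psi_\ep$ for the time-derivative bound. The only difference is that you spell out the commutation of $\partial_t$ with the Biot--Savart operator, which the paper uses implicitly.
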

\begin{proof}
	We observe first that $\curl \psi_\ep = \curl \psi= \varphi$. Moreover,  $\psi\in C^1_b (\RR_+;C^\infty)$ and $h_\ep\in \Lip (\RR_+)$ imply that $\psi_\ep\in \Lip (\RR_+;C^\infty)$. This proves part {\it (i)}.
	
	Next,  
	to prove {\it(ii)} we  use the mean value theorem to estimate
	\begin{align*}
	\norm{\psi_\ep(t,x)}_{L^\infty(B(h_\ep(t),R))}
	& = \norm{\psi(t,x)- \psi(t,h_\ep(t))}_{L^\infty(B(h_\ep(t),R))}\\
	& \leq |x-h_\ep(t)|\norm{ \na \psi(t,x)}_{L^\infty(B(h_\ep(t),R))}\\
	& \leq R \norm{\na \psi }_{H^2}\\
	&\leq C  R \norm{\varphi}_{H^2}
	\end{align*}
	where we used relation \eqref{var+psi}. This proves \eqref{psiep2}.
	
	We recall now that $h_\ep$ is Lipschitz in time so it is almost everywhere differentiable in time. Let $t$ be a time where $h_\ep$ is differentiable. We write
	\begin{align*}
	\pat\psi_{\ep }(t,x)
	&=\pat (\psi(t,x)-\psi(t,h_\ep(t)))\\
	&=\pat\psi(t,x)-\pat\psi(t,h_\ep(t))-h_\ep'(t)\cdot\nabla\psi(  t,h_\ep(t)).
	\end{align*}
We can bound
	\begin{align*}
	\|\partial_t\psi_\ep(t,\cdot)\|_{L^\infty(B(h_\ep(t),R))}
	&\leq \|\pat\psi(t,x)-\pat\psi(t,h_\ep(t))\|_{L^\infty(B(h_\ep(t),R))}+  |h_\ep'(t)|\norm{\nabla\psi(t,\cdot)}_{L^\infty}\\
	& \leq |x-h_\ep(t)| \norm{\partial_t \na \psi(t,\cdot)}_{L^\infty} + |h_\ep'(t)|\norm{\nabla\psi(t,\cdot)}_{L^\infty} \\
	& \leq C \left( R\norm{\partial_t\nabla\psi(t,\cdot)}_{H^2}+ |h_\ep'(t)|\norm{\nabla\psi(t,\cdot)}_{H^2} \right)\\
	& \leq C\left(  R\norm{\partial_t\varphi(t,\cdot)}_{H^2}+  |h_\ep'(t)|\norm{\varphi(t,\cdot)}_{H^2}\right).
	\end{align*}
	This completes the proof of the lemma.
\end{proof}

\section{Cut-off near the rigid body}
\label{defcutof}

In this section, we will construct a cut-off $\varphi_{\ep}$ near the rigid body, which will be used as a test function in the procedure of passing to the limit in Section \ref{paslim}.

Firstly, we construct a cut-off function $\eta_\ep(t,x)  $ near the ball $B(h_\ep(t), \ep)$. Let $\eta(x) \in C^\infty(\RR^{3};[0,1])$ be a function such that
\begin{equation*}
 \eta(x) :\RR^{3}\to [0,1], \quad \eta(x)  =
\begin{cases}
0&\text{ if }|x|\leq \frac32\\
1&\text{ if }|x|\geq 2
\end{cases}
\end{equation*}

The function  $\eta(x)$ is a cut-off function in the neighborhood of the unit ball $B(0,1)$. A cut-off $\eta_\ep(t,x)$ in the neighborhood of the domain $B(h_\ep(t),\ep)$ is the following function
\begin{equation}\label{defetaep}
\eta_\ep(t,x)=\eta \left(\frac{ x-h_\ep(t)}{\ep}\right)  =
\begin{cases}
0&\text{ if }|x-h_\ep(t)|\leq \frac32\ep\\
1&\text{ if }|x-h_\ep(t)|\geq 2\ep.
\end{cases}.  
\end{equation}
Notice that $\eta_\ep(t,x)$ is a space-time function while the function $\eta(x) $ only has a space variable. We state some properties of this new cut-off in the following lemma.

\begin{lem}\label{etaep}
The cut-off function $\eta_\ep$ satisfies
\begin{enumerate}[(i)]
\item $\eta_\ep\in \Lip (\RR_+;C^\infty)$;
\item $\eta_\ep$ vanishes in the neighborhood of the ball $B(h_\ep(t),\ep)$;
\item For any real number $q\geq1$ there exists a constant $C=C(q)$ such that 
\begin{gather*}
\norm{ \eta_\ep (t,\cdot)}_{L^\infty}=1, \quad \norm{\eta_\ep (t,\cdot)-1}_{L^q} \leq  C \ep^{\frac{3}{q}},\quad
 \norm{\nabla \eta_\ep (t,\cdot)}_{L^q} \leq C \ep^{\frac{3-q}{q}}, \quad
  \norm{\nabla^2\eta_\ep (t,\cdot)}_{L^q} \leq C \ep^{\frac{3-2q}{q}} .
\end{gather*}
\end{enumerate} 
\end{lem}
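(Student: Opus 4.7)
The plan is essentially a direct computation based on the scaling definition $\eta_\ep(t,x)=\eta\bigl(\frac{x-h_\ep(t)}{\ep}\bigr)$. There is no serious obstacle: the estimates are standard scaling estimates for a cut-off around a ball of radius $\ep$ whose center moves in a Lipschitz way. The only mild care required is to keep track of the correct powers of $\ep$ from a change of variables.

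First I would prove \textit{(i)} and \textit{(ii)} together. The function $\eta$ is smooth and the map $t\mapsto h_\ep(t)$ belongs to $\Lip(\RR_+)$ by hypothesis of Theorem \ref{th2}, so $(t,x)\mapsto \frac{x-h_\ep(t)}{\ep}$ is smooth in $x$ and Lipschitz in $t$, hence $\eta_\ep\in\Lip(\RR_+;C^\infty)$. By the very definition of $\eta$ we have $\eta_\ep(t,x)=0$ whenever $|x-h_\ep(t)|\leq \tfrac32\ep$, which is an open neighborhood of $\overline{B}(h_\ep(t),\ep)$, giving \textit{(ii)}.

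For \textit{(iii)}, the $L^\infty$ bound is immediate since $\eta$ takes values in $[0,1]$ and attains the value $1$. For the three $L^q$ estimates I would observe that $1-\eta_\ep$, $\nabla \eta_\ep$ and $\nabla^2 \eta_\ep$ are all supported in the ball $B(h_\ep(t),2\ep)$, whose volume is $\frac{32\pi}{3}\ep^3$. The chain rule gives
\begin{equation*}
\nabla \eta_\ep(t,x)=\frac{1}{\ep}(\nabla\eta)\!\left(\tfrac{x-h_\ep(t)}{\ep}\right),\qquad \nabla^2\eta_\ep(t,x)=\frac{1}{\ep^2}(\nabla^2\eta)\!\left(\tfrac{x-h_\ep(t)}{\ep}\right),
\end{equation*}
so each is bounded in $L^\infty$ by $C/\ep$ and $C/\ep^2$ respectively (the constants depending only on $\eta$). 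A crude bound by sup $\times$ measure of support then yields
\begin{equation*}
\|\eta_\ep(t,\cdot)-1\|_{L^q}^q\leq C\ep^3,\quad \|\nabla\eta_\ep(t,\cdot)\|_{L^q}^q\leq \frac{C}{\ep^q}\cdot\ep^3,\quad \|\nabla^2\eta_\ep(t,\cdot)\|_{L^q}^q\leq \frac{C}{\ep^{2q}}\cdot\ep^3,
\end{equation*}
which, after taking the $q$-th root, are exactly the three claimed estimates. Alternatively one may do the change of variables $y=(x-h_\ep(t))/\ep$, which absorbs the factor $\ep^3$ from $dx=\ep^3\,dy$ into a pure constant $C(q)=\|\,\cdot\,\|_{L^q}$ of fixed functions of $y$; this confirms that the constants depend only on $q$ (and on the fixed profile $\eta$), but not on $\ep$ or on $h_\ep$.

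Since each step reduces to a one-line scaling argument, the main (minor) care is to write the supports correctly and to notice that $h_\ep$ only enters as a translation, so it does not affect any $L^q$ norm. No deeper ingredient is needed.
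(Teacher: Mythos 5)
Your proposal is correct and matches the paper's argument: parts \textit{(i)} and \textit{(ii)} are handled identically, and for \textit{(iii)} the paper simply performs the change of variables $y=(x-h_\ep(t))/\ep$ to pull out the factor $\ep^{3/q}$ (together with $\ep^{-1}$ or $\ep^{-2}$ from the chain rule), which is exactly the alternative you mention at the end. Your primary route via ``sup times measure of the support $B(h_\ep(t),2\ep)$'' gives the same powers of $\ep$ and is an equally valid one-line scaling argument.
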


\begin{proof}
Since $h_\ep$ is Lipschitz part {\it(i)} follows immediately. Part  {\it (ii)} is also obvious. We prove now part {\it(iii)}. 

Clearly $\norm{ \eta_\ep (t,\cdot)}_{L^\infty}=1$. Next
\begin{gather*}
\norm{\eta_{\ep}(t,x)-1}_{L^{q}}
=  \norm{\eta\left(\frac{x-h_{\ep} (t)}{\ep}\right)-1}_{L^{q}}
=  \ep^{\frac{3}{q}} \norm{\eta(x) -1}_{L^{q}} \leq C \ep^{\frac{3}{q}}.
\end{gather*}
Notice that $\na \eta (x)$ and $\na^2 \eta(x)$ are bounded functions supported in the annulus $\{\frac32 < |x|< 2 \}$. So
\begin{gather*}
\norm{\na \eta_{\ep}(t,x)}_{L^{q}} 
=   \norm{ \frac{1}{\ep} \na \eta \left(\frac{x-h_{\ep}(t)}{\ep}\right)}_{L^{q}} 
=  \ep^{\frac{3-q}{q}} \norm{\na \eta(x) }_{L^{q}} \leq C \ep^{\frac{3-q}{q}},\\
\norm{\na^{2}\eta_{\ep} (t,x) }_{L^{q}}
=  \norm{  \frac{1}{\ep^2} \na^2 \eta \left(\frac{x-h_{\ep}(t)}{\ep}\right)}_{L^{q}} 
= \ep^{\frac{3-2q}{q}} \norm{\na^{2} \eta(x) }_{L^{q}} \leq C \ep^{\frac{3-2q}{q}}.
\end{gather*}
This completes the proof of the lemma.
\end{proof}

Given a test function  $\varphi\in C^1_b (\RR_+;C^\infty_{0,\sigma})$, we use the cutoff $\eta_\ep$ and the modified stream function $\psi_\ep$  defined in Section \ref{sectnot} (see relation \eqref{def_psiep}) to construct a new test function  $\varphi_\ep$  which vanishes in the neighborhood of the ball   $B(h_\ep(t),\ep)$. We define 
\begin{equation}\label{cutof}
\varphi_{\ep}=\curl(\eta_{\ep} \psi_\ep).
\end{equation}

We notice that this new test function $\varphi_\ep$ depends on time even if $\varphi$ is assumed to be constant in time. We state some properties of $\varphi_\ep$ in the following lemma:
\begin{lem}{\label{lemmaphi}}
The test function $\varphi_\ep$ has the following properties:
\begin{enumerate}[(i)]
\item $\varphi_{\ep}\in \Lip (\RR_+;C^\infty_{0,\sigma})$ and  $\varphi_\ep$   vanishes in the neighborhood of   $B(h_\ep(t),\ep)$;
\item for all $T>0$ we have that $\varphi_{\ep} \to \varphi$ strongly in $L^\infty(0,T;H^1)$ as $\ep\to0$;
\item there exists a universal constant $C$ such that for all $T>0$
  \begin{equation*}
\|\varphi_{\ep}\|_{L^\infty(0,T;H^{1})} \leq C \|\varphi\|_{L^\infty(0,T;H^2)}.    
  \end{equation*}
\end{enumerate}
\end{lem}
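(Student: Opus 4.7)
Plan: The decomposition that drives the entire proof is
\begin{equation*}
\varphi_\ep = \curl(\eta_\ep\psi_\ep) = \eta_\ep\,\curl\psi_\ep + \nabla\eta_\ep\times\psi_\ep = \eta_\ep\varphi + \nabla\eta_\ep\times\psi_\ep,
\end{equation*}
where we used the product rule for the curl and the identity $\curl\psi_\ep=\varphi$ from Lemma \ref{lempsi}(i). The first term has compact support in $x$ inherited from $\varphi$, and the second is supported in the annulus $\{\tfrac32\ep\leq|x-h_\ep(t)|\leq2\ep\}$ where $\nabla\eta_\ep$ lives; hence $\varphi_\ep(t,\cdot)$ is compactly supported. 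It is smooth in $x$, divergence free as a curl, and Lipschitz in $t$ by combining Lemma \ref{lempsi}(i) and Lemma \ref{etaep}(i). Since $\eta_\ep$ vanishes on a neighborhood of $B(h_\ep(t),\ep)$, so does $\eta_\ep\psi_\ep$ and hence $\varphi_\ep$. This proves (i).

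For (ii) and (iii) I would estimate each of the two pieces of the decomposition in $H^1$, uniformly in $t\in[0,T]$. For the first piece, write $\eta_\ep\varphi-\varphi=(\eta_\ep-1)\varphi$ and use the product rule; all factors are then controlled using $\|\eta_\ep-1\|_{L^2}\leq C\ep^{3/2}$ and $\|\nabla\eta_\ep\|_{L^2}\leq C\ep^{1/2}$ from Lemma \ref{etaep}(iii) with $q=2$, combined with the Sobolev embedding $H^2\hookrightarrow L^\infty$ applied to $\varphi$ and $\nabla\varphi$. This gives $\|\eta_\ep\varphi-\varphi\|_{H^1}\leq C\ep^{1/2}\|\varphi\|_{H^2}$. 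For the commutator piece, the crucial observation is that $\psi_\ep(t,h_\ep(t))=0$, so the mean value bound \eqref{psiep2} applied on the support of $\nabla\eta_\ep$ yields $\|\psi_\ep(t,\cdot)\|_{L^\infty(B(h_\ep(t),2\ep))}\leq C\ep\|\varphi(t,\cdot)\|_{H^2}$. Pairing this with $\|\nabla\eta_\ep\|_{L^2}\leq C\ep^{1/2}$ and $\|\nabla^2\eta_\ep\|_{L^2}\leq C\ep^{-1/2}$ from Lemma \ref{etaep}(iii), and bounding $\|\nabla\psi_\ep\|_{L^\infty}=\|\nabla\psi\|_{L^\infty}\leq C\|\nabla\psi\|_{H^2}\leq C\|\varphi\|_{H^2}$ via \eqref{var+psi} and Sobolev embedding, I obtain $\|\nabla\eta_\ep\times\psi_\ep\|_{H^1}\leq C\ep^{1/2}\|\varphi\|_{H^2}$. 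Adding the two estimates yields (ii), with rate $\ep^{1/2}$, and (iii) by the triangle inequality.

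The only real obstacle is the commutator term: since $\nabla\eta_\ep$ is of size $\ep^{-1}$ pointwise, a direct pointwise estimate of $\nabla\eta_\ep\times\psi_\ep$ would blow up as $\ep\to0$. The whole purpose of the normalization $\psi_\ep=\psi-\psi(t,h_\ep(t))$ introduced in \eqref{def_psiep} is to make $\psi_\ep$ small of order $\ep$ on the thin annulus where $\nabla\eta_\ep$ is supported, via the mean value theorem; this gain of one factor of $\ep$ exactly compensates the loss from $\nabla\eta_\ep$ and is what makes the construction survive the vanishing-obstacle limit.
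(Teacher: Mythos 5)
Your proof follows the paper's argument essentially verbatim: the same decomposition $\varphi_\ep=\eta_\ep\varphi+\nabla\eta_\ep\times\psi_\ep$, the same use of Lemma \ref{etaep} for the cut-off norms, and the same key cancellation $\norm{\psi_\ep}_{L^\infty(B(h_\ep(t),2\ep))}\leq C\ep\norm{\varphi}_{H^2}$ from the normalization $\psi_\ep(t,h_\ep(t))=0$, which is indeed the heart of the matter. Parts (i) and the commutator estimate are correct as you state them.

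One step does not work as written. In the term $(\eta_\ep-1)\nabla\varphi$ arising from the product rule, you propose to bound $\norm{\eta_\ep-1}_{L^2}\norm{\nabla\varphi}_{L^\infty}$ using ``the Sobolev embedding $H^2\hookrightarrow L^\infty$ applied to $\nabla\varphi$''. In $\RR^3$ this would require $\nabla\varphi\in H^2$, i.e.\ control by $\norm{\varphi}_{H^3}$, which is not the norm appearing in the statement; $\norm{\nabla\varphi}_{L^\infty}$ is \emph{not} bounded by $\norm{\varphi}_{H^2}$. The paper avoids this by splitting the H\"older exponents differently on exactly this term: it uses $\norm{\eta_\ep-1}_{L^3}\norm{\nabla\varphi}_{L^6}\leq C\ep\norm{\varphi}_{W^{1,6}}\leq C\ep\norm{\varphi}_{H^2}$ via the embedding $H^2(\RR^3)\hookrightarrow W^{1,6}(\RR^3)$ and the bound $\norm{\eta_\ep-1}_{L^q}\leq C\ep^{3/q}$ with $q=3$. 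With that single substitution your argument closes and gives the claimed rate $\ep^{1/2}$ for (ii) and the bound (iii) by the triangle inequality.
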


\begin{proof}
The various norms used below are in the $x$ variable unless otherwise stated. 

Clearly, $\eta_\ep$ and $\psi_\ep$ are  $\Lip $ in time and smooth in space, so $\varphi_\ep$ has the same properties. The function $\varphi_\ep$ is a curl so it is divergence free. Because $\eta_\ep$   vanishes in the neighborhood of   $B(h_\ep(t),\ep)$, so is $\varphi_\ep$. The compact support in space follows immediately once we recall that  $\curl \psi_\ep = \varphi$ and observe that
$$\varphi_{\ep}=\curl(\eta_{\ep} \psi_\ep) =  \eta_{\ep} \varphi + \na \eta_{\ep} \times \psi_\ep$$
Claim {\it(i)} is proved.

To prove {\it(ii)}, we observe that $\text{supp} \; \na \eta_\ep \subset \{ |x-h_\ep(t)|\leq 2\ep\}$ and we estimate 
\begin{align*}
\norm{\varphi_{\ep} - \varphi}_{L^2} 
& \leq  \norm{(\eta_{\ep} -1) \varphi}_{L^2}+ \norm{\na \eta_{\ep} \times \psi_\ep}_{L^2}\\
& \leq  \norm{\eta_{\ep}-1}_{L^2} \norm{\varphi}_{L^{\infty}} + \norm{\na \eta_{\ep}}_{L^2} \norm{\psi_\ep}_{L^{\infty}\left(B(h_\ep (t), 2\ep)\right)}\\
& \leq C \left(  \ep^{\frac{3}{2}} \norm{\varphi}_{L^{\infty}}  +  \ep^{\frac{3}{2}} \norm{\varphi}_{H^{2}}   \right)\\
&\leq  C \ep^{\frac{3}{2}} \norm{\varphi}_{H^2}
\end{align*}
where we used Lemmas \ref{lempsi} and \ref{etaep}.

Taking the supremum on $[0,T]$ we infer that
\begin{equation}\label{estl2}
\| \varphi_{\ep} - \varphi  \|_{L^\infty(0,T;L^{2})} \leq C  \ep^{\frac{3}{2}} \norm{\varphi}_{L^\infty(0,T; H^2 )}.
\end{equation}

Next, by the Sobolev embedding $H^2(\RR^3) \hookrightarrow W^{1,6}(\RR^3)$ and by Lemma \ref{etaep} we estimate
\begin{align*}
\norm{ \na (\varphi_{\ep} - \varphi)}_{L^2} 
& =  \norm{\na \big((\eta_{\ep} -1) \varphi + \na \eta_{\ep} \times \psi_\ep \big)}_{L^2}\\
& \leq  \norm{\eta_{\ep}-1}_{L^3} \norm{\na \varphi}_{L^6} +  \norm{\na \eta_{\ep}}_{L^2} \norm{\varphi}_{L^\infty} + \norm{\na \eta_{\ep}}_{L^2} \norm{\nabla \psi_\ep }_{L^\infty}\\
& \hskip 5cm + \norm{\na^{2}\eta_{\ep}}_{L^2} \norm{\psi_\ep}_{L^{\infty}\left(B(h_\ep (t), 2\ep)\right)} \\
& \leq  C \left(\ep \norm{\varphi}_{W^{1,6}} +  \ep^{\frac{1}{2}} \norm{\varphi}_{L^\infty} + \ep^{\frac{1}{2}} \norm{\nabla \psi_\ep}_{L^\infty} +C\ep^{-\frac12}  \norm{\psi_\ep}_{L^{\infty}\left(B(h_\ep (t), 2\ep)\right)} \right).
\end{align*}

From relations \eqref{def_psiep} and \eqref{var+psi} we get that
\begin{equation*}
 \norm{\nabla \psi_\ep}_{L^\infty} =  \norm{\nabla \psi}_{L^\infty}  \leq C  \norm{\nabla \psi}_{H^2}\leq C\|\varphi\|_{H^2}.
\end{equation*}
From Lemma \ref{lempsi} we have that
\begin{equation*}
\norm{\psi_\ep}_{L^{\infty}\left(B(h_\ep (t), 2\ep)\right)} \leq C \ep  \|\varphi\|_{H^2}. 
\end{equation*}
We conclude from the above relations that
\begin{equation*}
\norm{ \na (\varphi_{\ep} - \varphi)}_{L^2} \leq C\ep^{\frac12} \|\varphi\|_{H^2}. 
\end{equation*}

Taking the supremum on $[0,T]$ we deduce that
\begin{equation}\label{estl22}
\| \na (\varphi_{\ep} - \varphi) \|_{L^\infty(0,T;L^{2})}
 \leq C   \ep^{\frac{1}{2}}   \norm{\varphi}_{L^{\infty}(0,T; H^2)}.
\end{equation}
We conclude form \eqref{estl2} and \eqref{estl22} that
$$\| \varphi_{\ep} - \varphi \|_{L^\infty(0,T;H^{1})} \leq C   \ep^{\frac{1}{2}}   \norm{\varphi}_{L^{\infty}(0,T; H^2)}\to 0 \quad \text{as} \quad \ep\to0.$$

This proves  {\it(ii)}. To prove   {\it(iii)} we simply bound
\begin{equation*}
\|\varphi_{\ep}\|_{L^\infty(0,T;H^{1})} \leq \|\varphi\|_{L^\infty(0,T;H^{1})} +\|\varphi_{\ep}-\varphi \|_{L^\infty(0,T;H^{1})} \leq C  \|\varphi\|_{L^\infty(0,T;H^2)}.
\end{equation*}
This completes the proof of the lemma.
\end{proof}

\section{Temporal estimate and strong convergence }
\label{temp}
The aim of this section is to derive a temporal estimate and to prove the strong convergence of some sub-sequence of $\ve$ in $L^2\loc(\RR_+\times\RR^3)$. We will prove the following result.
\begin{prop}\label{lemstrong}
There exists a sub-sequence $\vepk$ of $\ve$ which converges strongly in $L^2\loc(\RR_+\times\RR^3)$. 
\end{prop}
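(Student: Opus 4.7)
The plan is to apply a compactness argument of Aubin--Lions--Simon type. Spatial regularity is already in hand via the uniform bound of $\ve$ in $L^2\loc(\RR_+;H^1(\RR^3))$, so the task reduces to proving temporal equicontinuity. The main difficulty is that $\ve$ solves the Navier--Stokes equations only outside the moving ball $B(h_\ep(t),\ep)$, so $\partial_t \ve$ is not globally controlled; we only have control when testing against divergence-free fields vanishing near $B(h_\ep(t),\ep)$, such as the truncated $\varphi_\ep$ of Section \ref{defcutof}.

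Fix $\varphi\in C^\infty_{0,\sigma}(\RR^3)$ time-independent and let $\varphi_\ep=\curl(\eta_\ep\psi_\ep)$. Inserting $\chi_\delta(\tau)\varphi_\ep(\tau,x)$ into the weak formulation, where $\chi_\delta$ is a smooth approximation of the characteristic function of $[s,t]$, and letting $\delta\to0$ yields
\begin{equation*}
F_\ep(t)-F_\ep(s)=\int_s^t\!\int_{\RR^3}\ve\cdot\partial_\tau\varphi_\ep-\nu\int_s^t\!\int_{\RR^3}\nabla \ve:\nabla\varphi_\ep-\int_s^t\!\int_{\RR^3}(\ve\cdot\nabla \ve)\cdot\varphi_\ep,
\end{equation*}
where $F_\ep(\tau):=\int \ve(\tau)\cdot\varphi_\ep(\tau)\,dx$. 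By Cauchy--Schwarz and Lemma \ref{lemmaphi}\textit{(iii)}, the viscous term is bounded by $C(t-s)^{1/2}\|\varphi\|_{H^2}$. The nonlinear term is controlled via the interpolation $\|\ve\|_{L^3}\leq C\|\ve\|_{L^2}^{1/2}\|\ve\|_{H^1}^{1/2}$ together with the Sobolev embedding $\|\varphi_\ep\|_{L^6}\leq C\|\varphi\|_{H^2}$, giving $O((t-s)^{1/4}\|\varphi\|_{H^2})$.

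The delicate step is the term involving $\partial_\tau\varphi_\ep$. Since $\varphi$ (and hence $\psi$) is time-independent, $\partial_\tau\psi_\ep=-h'_\ep(\tau)\cdot\nabla\psi(h_\ep(\tau))$ is a constant vector in $x$, while $\partial_\tau\eta_\ep=-h'_\ep\cdot\nabla\eta_\ep$. Hence $\partial_\tau\varphi_\ep$ splits as $\nabla\eta_\ep\times c(\tau)+\curl\bigl((-h'_\ep\cdot\nabla\eta_\ep)\psi_\ep\bigr)$, both pieces supported in the annulus $\{3\ep/2<|x-h_\ep|<2\ep\}$. A naive $L^2$ estimate gives only a factor $|h'_\ep|\ep^{1/2}$, which does not match the hypothesis $\ep^{3/2}h'_\ep\to 0$. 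The sharp bound uses the Sobolev embedding $\|\ve\|_{L^6}\leq C\|\nabla \ve\|_{L^2}$ paired with $\|\nabla\eta_\ep\|_{L^{6/5}}\leq C\ep^{3/2}$ (from Lemma \ref{etaep}) for the first piece, and integration by parts (transferring $\curl$ onto $\ve$) combined with $\|\nabla\eta_\ep\|_{L^2}\leq C\ep^{1/2}$ and $\|\psi_\ep\|_{L^\infty(B(h_\ep,2\ep))}\leq C\ep\|\varphi\|_{H^2}$ (from Lemma \ref{lempsi}) for the second piece. This produces
\begin{equation*}
\Big|\int_{\RR^3}\ve\cdot\partial_\tau\varphi_\ep\Big|\leq C|h'_\ep(\tau)|\,\ep^{3/2}\,\|\nabla \ve(\tau)\|_{L^2}\,\|\varphi\|_{H^2},
\end{equation*}
and time integration gives $o(1)(t-s)^{1/2}\|\varphi\|_{H^2}$ via the hypothesis $\ep^{3/2}h'_\ep\to 0$ in $L^\infty\loc$.

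Combining the three bounds, $F_\ep$ is H\"older-$1/4$ equicontinuous on every $[0,T]$ uniformly in $\ep$. Since $\|\varphi_\ep-\varphi\|_{L^\infty(0,T;L^2)}=O(\ep^{3/2})$ by Lemma \ref{lemmaphi}\textit{(ii)}, the same equicontinuity (up to a vanishing error) holds for $\tau\mapsto\int \ve(\tau)\cdot\varphi$. Arzel\`a--Ascoli with a Cantor diagonal procedure over a countable dense subset of $C^\infty_{0,\sigma}(\RR^3)$ yields a subsequence $\vepk$ such that $\int \vepk(\cdot)\cdot\varphi$ converges in $C^0([0,T])$ for every such $\varphi$ and every $T>0$. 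Combined with the uniform bound in $L^2\loc(\RR_+;H^1(\RR^3))$ and the Rellich compactness of $H^1(K)$ in $L^2(K)$ for compact $K\subset\RR^3$, a standard Aubin--Lions--Simon argument upgrades this to strong convergence of $\vepk$ in $L^2\loc(\RR_+\times\RR^3)$. The main obstacle is the $\partial_\tau\varphi_\ep$ estimate in the third paragraph, where the hypothesis $\ep^{3/2}h'_\ep\to 0$ is used in a sharp way and the Sobolev embedding $H^1\hookrightarrow L^6$ is indispensable.
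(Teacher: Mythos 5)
Your proposal is correct and follows essentially the same route as the paper: the same equicontinuity estimate for $\tau\mapsto\int\ve(\tau)\cdot\varphi_\ep(\tau)$ with the same three bounds (including the sharp $\ep^{3/2}|h'_\ep|\,\|\nabla\ve\|_{L^2}$ control of the $\partial_\tau\varphi_\ep$ term via $H^1\hookrightarrow L^6$ and $\|\nabla\eta_\ep\|_{L^{6/5}}\lesssim\ep^{3/2}$), followed by Arzel\`a--Ascoli and an interpolation/Aubin--Lions step to upgrade to strong $L^2\loc$ convergence. The paper merely packages the functionals as $\Xi_\ep(t)\in H^{-2}_\sigma$ and uses the compact embedding $H^{-2}\hookrightarrow H^{-3}\loc$ plus the interpolation $\|f(\vepk-v)\|_{L^2}\leq C\|f(\vepk-v)\|_{H^{-3}}^{1/4}\|f(\vepk-v)\|_{H^1}^{3/4}$ to make your final ``standard Aubin--Lions--Simon argument'' explicit.
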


It suffices to prove that for any $T>0$ there exists a sub-sequence $\vepk$ of $\ve$ which converges strongly in $L^2(0,T;L^2\loc(\RR^3))$. A diagonal extraction then allows to choose the same subsequence for all times $T$. We choose some finite time $T$ and for the rest of this section we assume that $t\in[0,T]$.

The main idea  is to use the Arzelà–Ascoli theorem. Let $\varphi \in C^\infty_{0,\sigma}(\RR^{3})$ be a test function which does not depend on the time. By the definition of the modified stream function, we observe that even if $\varphi$ is constant in time, $\psi_\ep(t,x)$ still depends on the time through $h_\ep(t)$. We construct a family of $\varphi_\ep$ as in Section \ref{defcutof}, so that $\varphi_\ep$ is time-dependent and satisfies Lemma \ref{lemmaphi}.

We first bound
\begin{align*}
\Bigl| \int_{\RR^{3}}\ve(t,x)\cdot\varphi_\ep(t,x)\,dx\Bigr|
&= \Bigl|\int_{\RR^{3}}\ve\cdot\curl (\eta_\ep\psi_\ep)\,dx\Bigr|\\
& =\Bigl|\int_{\RR^{3}}\ve\cdot(\eta_\ep \varphi + \na \eta_\ep\times \psi_\ep)\,dx\Bigr|\\
&\leq \norm{\ve}_{L^2} \norm{\eta_\ep}_{L^\infty} \norm{\varphi}_{L^2} + \norm{\ve}_{L^2} \norm{ \na \eta_\ep}_{L^2} \norm{\psi_\ep}_{L^\infty\left(B(h_\ep(t),2\ep)\right)}\\
& \leq C \left( \norm{\ve}_{L^2} \norm{\varphi}_{L^2} + \ep^{\frac{3}{2}} \norm{\ve}_{L^2}  \norm{\varphi}_{H^2} \right)\\
&\leq C  \norm{\ve}_{L^2} \norm{\varphi}_{H^2} .
\end{align*}
where we used Lemma \ref{lempsi} and Lemma \ref{etaep}. The boundedness of $\ve$ in $L^{2}(\RR^3)$ implies that there exists a constant $C_1$ independent of $\ep $ and $t$ such that   \label{timecont}
\begin{align*}
\Bigl| \int_{\RR^{3}}\ve(t,x)\cdot\varphi_\ep(t,x)\,dx\Bigr|
\leq C_1  \norm{\varphi}_{H^2}.
\end{align*}
We infer that, for $t$ fixed and $\varphi \in C^\infty_{0,\sigma} $, the map
\begin{equation*}
\varphi \mapsto \int_{\RR^{3}}\ve(t,x)\cdot\varphi_\ep(t,x)\,dx\in\RR 
\end{equation*}
is linear and continuous for the $H^2$ norm. Then, there exists some $\Xi_\ep(t)\in H^{-2}_\sigma$ such that
\begin{equation*}
\langle \Xi_\ep(t),\varphi\rangle= \int_{\RR^{3}}\ve(t,x)\cdot\varphi_\ep(t,x)\,dx\qquad\forall\varphi \in H^2_\sigma. 
\end{equation*}
Moreover
\begin{equation}\label{boundpsiep}
\|\Xi_\ep(t)\|_{H^{-2}}\leq C_1\quad \forall t\geq0.  
\end{equation}

From Lemma \ref{lemmaphi} , we know that $\varphi_\ep$ vanishes in the neighborhood of $B(h_\ep(t),\ep)$, so it is compactly supported in the exterior of this ball. Therefore it  can be used as test function in \eqref{nsequation}. Multiplying \eqref{nsequation} by $\varphi_\ep$ and integrating in space and in time from $s$ to $t$ yields
\begin{equation*}
\int_s^t\int_{\RR^{3}}\partial_\tau\ve\cdot\varphi_\ep+ \nu\int_s^t\int_{\RR^{3}}\nabla\ve:\nabla\varphi_\ep+ \int_s^t\int_{\RR^{3}}\ve\cdot\nabla\ve\cdot\varphi_\ep=0.
\end{equation*}
After integrating by parts in time the first term above and using the definition of  $\langle \Xi_\ep(t),\varphi\rangle $, we obtain that
\begin{equation}\label{Xidif}
\langle \Xi_\ep(t)-\Xi_\ep(s),\varphi\rangle=  \int_s^t\int_{\RR^{3}}\ve\cdot\partial_\tau\varphi_\ep
-\nu\int_s^t\int_{\RR^{3}}\na\ve:\na\varphi_\ep
-\int_s^t\int_{\RR^{3}}\ve\cdot\na\ve\cdot\varphi_\ep.
\end{equation}

To bound the second term in the right-hand side above, we recall that $\ve$ is bounded independently of $\ep$ in $L^{\infty}(0,T; L^{2}) \cap L^{2}(0,T; H^{1})$. Thus, by the Hölder inequality and by Lemma \ref{lemmaphi}, we deduce that
\begin{align*}
 \bigl|\nu \int_{s}^{t} \int_{\RR^{3}} \na \ve :\na \varphi_{\ep}\bigr|
&\leq \nu \int_{s}^{t} \norm{\na \ve}_{L^{2}} \norm{\na \varphi_{\ep}}_{L^{2}} \\ 
& \leq C \nu(t-s)^{\frac{1}{2}} \norm{\varphi}_{H^{2}}  \norm{ \ve}_{L^{2}(0,T; H^1)} \\ 
& \leq C\nu (t-s)^{\frac{1}{2}}\norm{\varphi}_{H^2}.
\end{align*}

Next, we estimate the non-linear term in \eqref{Xidif} by the H\"older inequality and by the Gagliardo-Nirenberg inequality $\norm{\ve}_{L^{3}} \leq C \norm{\ve}_{L^{2}}^{\frac{1}{2}} \norm{\na\ve}_{L^{2}}^{\frac{1}{2}}$,
\begin{align*}
|\int_{s}^{t} \int_{\RR^{3}} \ve \cdot \na \ve \cdot \varphi_{\ep}| 
& \leq \int_{s}^{t} \norm{\ve}_{L^{3}}  \norm{\na \ve}_{L^{2}} \norm{\varphi_{\ep}}_{L^6}\\ 
& \leq C \int_{s}^{t} \norm{\ve}_{L^{2}}^{\frac{1}{2}} \norm{\na\ve}_{L^{2}}^{\frac{3}{2}} \norm{\varphi_{\ep}}_{H^1}\\ 
& \leq C (t-s)^{\frac{1}{4}} \norm{\ve}^{\frac{1}{2}}_{L^{\infty}(0,T;L^2)} \norm{\ve}^{\frac{3}{2}}_{L^{2}(0,T;H^1)} \norm{\varphi}_{H^2}\\
& \leq  C (t-s)^{\frac{1}{4}} \norm{\varphi}_{H^2}
\end{align*}

It remains to estimate the term with the time-derivative. Notice that since $\varphi$ does not depend on time, we have that $\partial_{t} \varphi = \partial_{t} \curl \psi_{\ep} = 0$. Several integrations by parts give us
\begin{align*}
\int_s^t\int_{\RR^{3}}\ve\cdot\partial_\tau\varphi_\ep
& = \int_s^t\int_{\RR^{3}}\ve\cdot\curl \partial_\tau  (\eta_\ep \psi_\ep) \\
& = \int_s^t\int_{\RR^{3}}\curl \ve\cdot\partial_\tau (\eta_\ep \psi_\ep) \\
& = \int_s^t\int_{\RR^{3}} \curl \ve\cdot \left(\partial_\tau \eta_\ep \psi_\ep\right) +\int_s^t\int_{\RR^{3}} \curl \ve\cdot \left(\eta_\ep \partial_\tau \psi_\ep \right)\\
& = \int_s^t\int_{\RR^{3}} \curl \ve\cdot \left(\partial_\tau \eta_\ep \psi_\ep\right) +\int_s^t\int_{\RR^{3}} \ve\cdot \curl\left(\eta_\ep \partial_\tau \psi_\ep \right)\\
& = \int_s^t\int_{\RR^{3}} \curl \ve\cdot \left(\partial_\tau \eta_\ep \psi_\ep\right) + \int_s^t\int_{\RR^{3}} \ve\cdot \left(\na \eta_\ep \times \partial_\tau \psi_\ep \right)
\end{align*}
We estimate the two terms in the right-hand side of the equality above by using Lemmas \ref{lempsi} and \ref{etaep} and recalling that $\ep^{\frac{3}{2}} |h'_{\ep}|$ is bounded in $L^\infty(0,T)$ independently of $\ep$:
\begin{align*}
|\int_s^t\int_{\RR^{3}} \curl \ve\cdot \left(\partial_\tau \eta_\ep \psi_\ep\right)| 
& \leq \int_s^t \frac{|h'_\ep|}{\ep} \norm{\curl \ve }_{L^2} \norm{\na \eta \left(\frac{x- h_\ep}{\ep} \right)}_{L^2} \norm{\psi_\ep}_{L^{\infty} (B(h_\ep, 2\ep))}\\
& \leq C \int_s^t \ep^{\frac{3}{2}} |h'_{\ep}| \norm{\curl \ve }_{L^2} \norm{\varphi}_{H^{2} }\\
& \leq C  (t-s)^{\frac{1}{2}}  \norm{ \ve }_{L^2(0,T;H^1)} \norm{\varphi}_{H^{2} } \\
& \leq C (t-s)^{\frac{1}{2}} \norm{\varphi}_{H^{2} }
\end{align*}
and 
\begin{align*}
|\int_s^t\int_{\RR^{3}} \ve\cdot \left(\na \eta_\ep \times \partial_\tau \psi_\ep \right)|
& \leq \int_s^t\norm{\ve}_{L^6} \norm{\na \eta_{\ep}}_{L^{\frac{6}{5}}} \norm{\partial_{\tau} \psi_\ep}_{L^{\infty}(B(h_\ep,2\ep))}\\
& \leq C \int_s^t \ep^{\frac{3}{2}} |h'_\ep| \norm{\ve}_{L^6} \norm{\varphi}_{H^2}\\
& \leq C  (t-s)^{\frac{1}{2}} \norm{ \ve }_{L^2(0,T;H^1)} \norm{\varphi}_{H^{2} } \\
& \leq C (t-s)^{\frac{1}{2}} \norm{\varphi}_{H^{2} }
\end{align*}
where we used the Sobolev embedding $H^1(\RR^3) \hookrightarrow L^{6}(\RR^3)$.

Gathering the two estimates above, we infer that
\begin{align*}
|\int_s^t\int_{\RR^{3}}\ve\cdot\partial_\tau\varphi_\ep | 
& \leq C  (t-s)^{\frac{1}{2}} \norm{\varphi}_{H^2}.
\end{align*}

Putting together all the estimates above yields the following bound for  $\Xi_\ep$: 
\begin{align*}
|\langle \Xi_\ep(t)-\Xi_\ep(s),\varphi\rangle | &\leq  
C\nu  (t-s)^{\frac{1}{2}}\|\varphi\|_{H^2}+C(t-s)^{\frac14}\|\varphi\|_{H^2}+C  (t-s)^{\frac{1}{2}} \norm{\varphi}_{H^2}\\
&\leq C  (t-s)^{\frac{1}{4}} \norm{\varphi}_{H^2}
\end{align*}
where the constant $C$ above depends on $T$ and $\nu$.

By density of $C^{\infty}_{0,\sigma}$ in $H^{2}_\sigma$, we then obtain that $\Xi_\ep(t)$ is equicontinuous in time with value in $H^{-2}_\sigma$
\begin{align*}
\norm{\Xi_\ep(t)-\Xi_\ep(s)}_{H^{-2}} \leq  C (t-s)^{\frac{1}{4}}.
\end{align*}
On the other hand, $\Xi_\ep(t)$ is also bounded in $H^{-2}_\sigma$, see relation \eqref{boundpsiep}. So the compact embedding $H^{-2} \hookrightarrow H^{-3}\loc $ and the Arzelà-Ascoli theorem enable us to extract a subsequence $\Xi_{\ep_k}$ of $\Xi_\ep$ converging to some $\Xi$ strongly in $H^{-3}\loc $:
\begin{equation}\label{strconv}
\Xi_{\ep_k} \to \Xi \quad\text{in } C^0(0,T;H^{-3}\loc).
\end{equation}

We now use Lemmas \ref{lempsi} and  \ref{etaep} to estimate
\begin{align*}
|\langle \Xi_\ep(t)-\ve(t),\varphi\rangle |
&= |\int_{\RR^{3}}\ve(t,x)\cdot\varphi_\ep(t,x) - \int_{\RR^{3}}\ve(t,x)\cdot\varphi(t,x)|\\
&= |\int_{\RR^{3}}\ve(t,x)\cdot (\eta_\ep \varphi + \na \eta_\ep \times \psi_\ep) - \int_{\RR^{3}}\ve(t,x)\cdot\varphi(t,x) | 
\\
& = |\int_{\RR^{3}}(\eta_\ep -1)\ve(t,x) \cdot \varphi + \int_{\RR^{3}} \ve\cdot (\na \eta_\ep \times \psi_\ep) | \\
& \leq  \norm{\ve}_{L^2}\norm{\eta_\ep -1}_{L^2} \norm{\varphi}_{L^\infty} + \norm{\ve}_{L^2} \norm{\na \eta_\ep}_{L^2} \norm{\psi_\ep}_{L^{\infty} (B(h_\ep, 2\ep))}\\
& \leq C \left(\ep^{\frac{3}{2}} \norm{\ve}_{L^2} \norm{\varphi}_{L^\infty} +  \ep^{\frac{3}{2}} \norm{\ve}_{L^2} \norm{\varphi}_{H^2} \right)\\
&\leq C \ep^{\frac{3}{2}}  \norm{\varphi}_{H^2}  \norm{\ve}_{L^2}.
\end{align*}
Using again the density of  $C^{\infty}_{0,\sigma}$ in $H^{2}_\sigma$, the above estimate implies that
\begin{equation*}
\norm{\Xi_\ep(t)-\ve(t)}_{H^{-2}} \leq C \ep^{\frac{3}{2}}  \norm{\ve}_{L^2}. 
\end{equation*}
So $\Xi_\ep-\ve\to0$ in $L^\infty(0,T;H^{-2})$. In particular $\Xi_\ep-\ve\to0$ in $L^\infty(0,T;H^{-3}\loc)$. Recalling \eqref{strconv} and relabelling $\Xi=v$ we infer that 
\begin{equation}\label{convepk}
\vepk \to v \quad \text{in } L^\infty(0,T;H^{-3}\loc).
\end{equation}

Let $f\in C^\infty_0(\RR^3)$.
We have the interpolation inequality
\begin{equation*}
\|f(\vepk-v)\|_{L^2}\leq C\|f(\vepk-v)\|^{\frac14}_{H^{-3}}\|f(\vepk-v)\|^{\frac34}_{H^1}   
\end{equation*}
so
\begin{equation*}
\|f(\vepk-v)\|_{L^{\frac83}(0,T;L^2)}\leq C \|f(\vepk-v)\|^{\frac14}_{L^\infty(0,T;H^{-3})}\|f(\vepk-v)\|^{\frac34}_{L^2(0,T;H^1)}.   
\end{equation*}
Given relation \eqref{convepk} and the boundedness of $\ve$ in $L^2(0,T;H^1)$ we observe that the right-hand side above goes to 0 as $\ep_k\to0$. We deduce that 
\begin{equation*}
\vepk\to v\quad\text{strongly in }  L^{\frac83}(0,T;L^2\loc).
\end{equation*}

The embedding  $L^{\frac83}(0,T;L^2\loc)\subset  L^2(0,T;L^2\loc)$ completes the proof of Proposition \ref{lemstrong}.

\section{Passing to the limit}
\label{paslim}

In this section we are going to complete the proof of Theorem \ref{th2} by passing to the limit with compactness methods. 

Let $T>0$ be finite and fixed. We will pass to the limit only on the time interval $[0,T]$. A diagonal extraction allows us to find a subsequence which converges to the expected limit for all $t\geq0$. 

Thanks to the assumptions on $\ve$, we know that 
\begin{align*}
\ve \; \text{is bounded in} \; L^{\infty}\left(0,T; L^{2}\right) \cap L^{2}\left(0,T; H^{1}\right).
\end{align*}
This implies that there exists some $v \in L^{\infty}\left(0,T; L^{2}\right) \cap L^{2}\left(0,T; H^{1}\right) $ and some sub-sequence $\vepk$ of $\ve$ such that
\begin{gather}
\vepk\rightharpoonup v \quad\text{weak$\ast$ in } L^\infty(0,T;L^2)\notag,\\
\vepk\rightharpoonup v \quad\text{weakly in } L^2(0,T;H^1) \label{weakcon}.
\end{gather}
Moreover, using  Proposition \ref{lemstrong}, we can further assume that
\begin{equation*}
\vepk\to v \quad\text{strongly in } L^2(0,T;L^2\loc). 
\end{equation*}

The main goal of this Section is to prove that the limit $v$ is the solution of the Navier-Stokes equations in $\RR^3$ with initial data $v_0(x)$.

Let $\varphi \in C_{0}^{\infty}([0,T) \times \RR^{3})$ be a divergence-free vector field. 
We construct the family of vector fields $\varphi_{\ep_{k}}$ as in Section \ref{defcutof} (see relation  \eqref{cutof}). These vector fields are compactly supported in the exterior of the ball  $B(h_{\ep_k}(t), \ep_k)$, so they can be used as test functions in \eqref{nsequation}. Multiplying \eqref{nsequation} by $\vepk$ and integrating in space and time yields
\begin{multline} {\label{equalityepk}}
-\int_{0}^{T} \int_{\RR^{3}} \vepk \cdot \pat \varphi_{\ep_{k}} + \nu \int_{0}^{T} \int_{\RR^{3}} \na \vepk : \na \varphi_{\ep_{k}} + \int_{0}^{T} \int_{\RR^{3}} \vepk \cdot \na \vepk \cdot \varphi_{\ep_{k}} \\ = \int_{\RR^{3}} \vepk(0) \cdot \varphi_{\ep_{k}}(0). 
\end{multline}

We will pass to the limit $\ep_k \to 0$ in each of the term in the equation above. First, from Lemma \ref{lemmaphi}, we have that
\begin{align*}
\varphi_{\ep_{k}} (0) \rightarrow \varphi(0) \;\text{strongly in}\; L^{2}(\RR^{3}).
\end{align*}
We also know by hypothesis that $\ve(0,x)$ converges weakly to $v_0(x)$ in $L^2(\RR^3)$.
We infer that
\begin{align}{\label{converge3}}
\int_{\RR^{3}} \vepk (0)  \cdot \varphi_{\ep_{k}}(0)\toepk \int_{\RR^{3}} v(0)  \cdot \varphi(0).
\end{align}

Next, we also know from Lemma \ref{lemmaphi} that 
\begin{align*}
\na \varphi_{\ep_{k}} \rightarrow \na \varphi \;\text{strongly in}\; L^{\infty}(0,T; L^2).
\end{align*}
Recalling that $\nabla \vepk\rightharpoonup\nabla v$ weakly in  $L^{2}([0,T]\times \RR^{3})$, see relation \eqref{weakcon}, 
we deduce that
\begin{align}{\label{converge2}}
\int_{0}^{T}  \int_{\RR^{3}} \na \vepk : \na \varphi_{\ep_{k}} \toepk \int_{0}^{T}  \int_{\RR^{3}} \na v : \na \varphi.
\end{align}

We decompose the non-linear term in the left-hand of \eqref{equalityepk} 
as follows:
\begin{equation*}
\int_{0}^{T} \int_{\RR^{3}} \vepk \cdot \na \vepk \cdot \varphi_{\ep_k}
= \int_{0}^{T} \int_{\RR^{3}} \vepk \cdot \na \vepk \cdot \varphi
+\int_{0}^{T} \int_{\RR^{3}} \vepk \cdot \na \vepk \cdot (\varphi_{\ep_k}-\varphi).
\end{equation*}

To treat the first term on the right-hand side, we know that $\varphi$ is compactly supported, that $\nabla \vepk\rightharpoonup\nabla v$ weakly in  $L^{2}([0,T]\times \RR^{3})$ and that $\vepk\to v $ strongly in $ L^2(0,T;L^2\loc) $. These observations enable us to pass to the limit:
\begin{equation*}
\int_{0}^{T} \int_{\RR^{3}} \vepk \cdot \na \vepk \cdot \varphi
\toepk\int_{0}^{T} \int_{\RR^{3}} v \cdot \na v \cdot \varphi.   
\end{equation*}

For the second term, we make an integration by parts to get that 
\begin{equation*}
	\int_{0}^{T} \int_{\RR^{3}} \vepk \cdot \na \vepk \cdot (\varphi_{\ep_k}-\varphi) = - \int_{0}^{T} \int_{\RR^{3}} \vepk \otimes \vepk  : \na(\varphi_{\ep_k}-\varphi).
\end{equation*}
By the H\"older inequality, the Gagliardo-Nirenberg inequality $\norm{\vepk}_{L^{4}} \leq C \norm{\vepk}_{L^{2}}^{\frac{1}{4}} \norm{\na\vepk}_{L^{2}}^{\frac{3}{4}}$ and the strong convergence of $\varphi_{\ep_k}$ in $L^{\infty}(0,T;H^1)$ stated in Lemma \ref{lemmaphi}, we  obtain that 
\begin{align*}
|- \int_{0}^{T} \int_{\RR^{3}} \vepk \otimes \vepk : \na  (\varphi_{\ep_k}-\varphi)|
& \leq \int_0^T \norm{\vepk}_{L^{4}}^2 \norm{\na (\varphi_{\ep_k}-\varphi)}_{L^2}\\ 
& \leq C \int_0^T \norm{\vepk}_{L^{2}}^{\frac{1}{2}} \norm{\na\vepk}_{L^{2}}^{\frac{3}{2}} \norm{\varphi_{\ep_k}-\varphi}_{H^1}\\ 
& \leq C T^{\frac{1}{4}} \norm{\vepk}^{\frac{1}{2}}_{L^{\infty}(0,T;L^2)} \norm{\vepk}^{\frac{3}{2}}_{L^{2}(0,T;H^1)} \norm{\varphi_{\ep_k}-\varphi}_{L^{\infty}(0,T;H^1)}\\
& \toepk 0
\end{align*}
where we also used the boundedness of $\vepk$ in $L^{\infty}\left(0,T; L^{2}\right) \cap L^{2}\left(0,T; H^{1}\right)$.

 Combining the relations above, we deduce that
 \begin{equation}\label{converge1}
\int_{0}^{T} \int_{\RR^{3}} \vepk \cdot \na \vepk \cdot \varphi_{\ep_k}
\toepk\int_{0}^{T} \int_{\RR^{3}} v \cdot \na v \cdot \varphi.    
 \end{equation}

Now, it remains to pass to the limit in the first term on the left-hand side of \eqref{equalityepk}. Integrating by parts twice allows us to decompose this term into three parts as follows:
\begin{align*}
\int_{0}  ^{T}  \int_{\RR^{3}}\vepk\cdot  \pat\varphi_{\ep_k}
 &= \int_{0}^{T}\int_{\RR^{3}}\vepk\cdot  \curl \pat \left( \eta_{\ep_k} \psi_{\ep_k} \right)\\
&  =  \int_{0}^{T}\int_{\RR^{3}} \curl \vepk \cdot \pat \left(\eta_{\ep_k} \psi_{\ep_k} \right)\\
 & = \int_{0}^{T} \int_{\RR^{3}} \curl \vepk \cdot \left( \pat \eta_{\ep_k} \psi_{\ep_k} \right) + \int_{0}^{T} \int_{\RR^{3}} \curl \vepk \cdot \left( \eta_{\ep_k} \pat \psi_{\ep_k}\right)\\
  & =  \int_{0}^{T} \int_{\RR^{3}} \curl \vepk \cdot \left(\pat \eta_{\ep_k} \psi_{\ep_k} \right) + \int_{0}^{T} \int_{\RR^{3}} \vepk \cdot \curl \left( \eta_{\ep_k} \pat \psi_{\ep_k}\right)\\
   & =  \int_{0}^{T} \int_{\RR^{3}} \curl \vepk \cdot \left( \pat \eta_{\ep_k} \psi_{\ep_k} \right) + \int_{0}^{T} \int_{\RR^{3}}  \vepk \cdot \left(\na \eta_{\ep_k} \times \pat \psi_{\ep_k}\right) \\
&\hskip 6cm +  \int_{0}^{T} \int_{\RR^{3}}  \vepk \cdot \left( \eta_{\ep_k}  \curl \pat \psi_{\ep_k} \right)\\
 & =  \int_{0}^{T} \int_{\RR^{3}} \curl \vepk \cdot \left( \pat \eta_{\ep_k} \psi_{\ep_k} \right) + \int_{0}^{T} \int_{\RR^{3}}  \vepk \cdot \left(\na \eta_{\ep_k} \times \pat \psi_{\ep_k}\right) \\
&\hskip 7cm +  \int_{0}^{T} \int_{\RR^{3}}  \vepk  \cdot \left(  \eta_{\ep_k}  \pat \varphi \right)
\end{align*}
where we used the fact that $ \curl \psi_{\ep_k} = \varphi$ (see Lemma \ref{lempsi}).

We will treat the three terms in the right-hand side of the relation above.  
For the first term,  we use the H{\"o}lder inequality twice, the definition of $\eta_\ep$ (see relation \eqref{defetaep})  and Lemma \ref{lempsi}  to bound
\begin{align*}
|  \int_{0}^{T} \int_{\RR^{3}} \curl \vepk& \cdot \left(  \pat \eta_{\ep_k} \psi_{\ep_k}\right)| \\
& \leq   \int_{0}^{T} \frac{|h'_{\ep_k}(t)|}{\ep_k}    \norm{ \curl \vepk}_{L^{2}} \norm{\na \eta \left( \frac{x-h_{\ep_k}(t)}{{\ep_k}} \right) }_{L^2} \norm{\psi_{\ep_k}}_{L^{\infty} (B(h_{\ep_k}(t), 2\ep_k))} \nonumber \\
&\leq C \int_{0}^{T} \ep_k^{3/2} |h'_{\ep_k}(t)|   \norm{ \curl \vepk}_{L^2}   \norm{\varphi}_{H^{2}} \nonumber \\
&\leq  C T^{\frac{1}{2}}  \ep_k^{3/2} \|h'_{\ep_k}(t)\|_{L^\infty(0,T)} \norm{ \vepk}_{L^2(0,T; H^1)}  \norm{\varphi}_{L^\infty(0,T;H^{2})} \nonumber\\
& \toepk 0.  \nonumber
\end{align*}
where we used the hypothesis $\ep_k^{3/2} h'_{\ep_k}(t)\to 0 $ in $L^\infty\loc (\RR_+)$ when $\ep_k \to 0$.

To bound the second term, we use again the H\"older inequality, Lemmas \ref{lempsi} and \ref{etaep} and the hypothesis on  $\ep_k^{3/2} h'_{\ep_k}$: 
\begin{align*}
|\int_{0}^{T} \int_{\RR^{3}}  \vepk \cdot \left(\na \eta_{\ep_k} \times  \pat \psi_{\ep_k} \right)|
&\leq  \int_{0}^{T}  \norm{\vepk}_{L^{6}} \norm{\na \eta_{\ep_k}}_{L^{\frac{6}{5}}}  \norm{\pat \psi_{\ep_k} }_{L^{\infty}(B(h_{\ep_k}(t), 2\ep_k))} \nonumber\\
& \leq C \int_0^T \ep_k^{3/2} \norm{\vepk}_{L^{6}} \left(  \ep_k\norm{\partial_t\varphi}_{H^2}+|h_{\ep_k}'(t)|\norm{\varphi}_{H^2}\right) \nonumber \\
&\leq C \int_{0}^{T} \norm{\vepk}_{H^{1}}  \left(  \ep_k^{5/2}\norm{\partial_t\varphi}_{H^2}+ \ep_k^{3/2}|h_{\ep_k}'(t)|\norm{\varphi}_{H^2}\right) \nonumber \\
& \leq C T^{\frac{1}{2}}  \norm{\vepk}_{L^2(0,T;H^{1})} \Bigl( 
\ep_k^{5/2} \norm{\partial_t \varphi}_{L^\infty(0,T;H^{2})} \\
&\hskip 3.5cm 
+ \ep_k^{3/2} \|h'_{\ep_k}\|_{L^\infty(0,T)} \norm{\varphi}_{L^\infty(0,T;H^{2})} \Bigr) \nonumber\\
& \toepk 0. \nonumber
\end{align*}
where we also used the Sobolev embedding $H^1(\RR^3) \hookrightarrow L^{6}(\RR^3)$.

For the third term, we shall write $\vepk  \eta_{\ep_k} = \vepk  (\eta_{\ep_k} - 1) + (\vepk -v ) + v$ to get that 
\begin{equation*}
   \int_{0}^{T} \int_{\RR^{3}}  \vepk \cdot \left(\eta_{\ep_k}  \pat \varphi \right) 
   =  \int_{0}^{T} \int_{\RR^{3}}  (\eta_{\ep_k} - 1) \vepk  \cdot \pat \varphi +  \int_{0}^{T}  \int_{\RR^{3}}  (\vepk -v )   \cdot \pat \varphi +  \int_{0}^{T}  \int_{\RR^{3}} v  \cdot \pat \varphi
\end{equation*} 
Recalling  that  $\vepk\to v$ strongly in $ L^2(0,T;L^2\loc)$, we observe that the second term in the right-hand side of the equality above converges to $0$. We estimate the first term by the H\"older inequality and by Lemma \ref{etaep} 
\begin{align*}
  \big{|} \int_{0}^{T} \int_{\RR^{3}} (\eta_{\ep_k} - 1)\vepk   \cdot \pat \varphi \big{|}
  & \leq \int_{0}^{T} \norm{\vepk}_{L^2} \norm{\eta_{\ep_k} - 1}_{L^2} \norm{ \pat \varphi}_{L^\infty} \\
  & \leq C \ep_k^{\frac{3}{2}} \int_{0}^{T} \norm{\vepk}_{L^2} \norm{ \pat \varphi}_{L^\infty} \\
  & \leq C T\ep_k^{\frac{3}{2}} \norm{\vepk}_{L^\infty(0,T;L^2)} \norm{ \partial_t\varphi}_{L^\infty(0,T; L^\infty)} \\
&\toepk 0 .
\end{align*}
We infer that  
\begin{equation*}
\int_{0}^{T} \int_{\RR^{3}}  \vepk \cdot \left(\eta_{\ep_k}  \pat \varphi \right) \toepk \int_{0}^{T} \int_{\RR^{3}} v \cdot \pat \varphi,
\end{equation*}
which implies that 
\begin{equation}{\label{converge5}}
\int_{0}  ^{T}  \int_{\RR^{3}}\vepk\cdot  \pat\varphi_{\ep_k}
 \toepk \int_{0}^{T} \int_{\RR^{3}} v \cdot \pat \varphi.
\end{equation}

Gathering \eqref{equalityepk}, \eqref{converge3}, \eqref{converge2}, \eqref{converge1} and \eqref{converge5}, we conclude that 
\begin{equation*}
-\int_{0}^{T} \int_{\RR^{3}} v \cdot \partial_{t}\varphi 
 + \nu \int_{0}^{T} \int_{\RR^{3}} \nabla v :\nabla \varphi + \int_{0}^{T} \int_{\RR^{3}} v\cdot \nabla v\cdot \varphi =  \int_{\RR^{3}} v(0) \cdot\varphi(0)
\end{equation*}
which is the weak formulation of the Navier-Stokes equations in $\RR^3$. This completes the proof that $v$ is a solution of the Navier-Stokes equations in $\RR^3$ in the sense of  distributions.

\medskip

In order to complete the proof of Theorem \ref{th2}, it remains to prove the energy inequality \eqref{ineq_vep} under the additional assumption that $\ve(0)$ converges strongly to $v_0$ in $L^2$.

Let us observe first that $v\in{C}_{w}^{0}(\RR_+; L^{2}(\RR^{3}))$. This follows from the fact that $v\in L^\infty\loc(\RR_+;L^2(\RR^3))$ is a solution in the sense of distributions of the Navier-Stokes equations in $\RR^3$. The argument is classical, but let us recall it for the benefit of the reader. We apply the Leray projector $\PP$ in $\RR^3$ to the Navier-Stokes equations verified by $v$ to obtain that
\begin{equation*}
\partial_t v-\nu\Delta v+\PP\dive(v\otimes v)=0.  
\end{equation*}
Because $v\in L^\infty\loc(\RR_+;L^2(\RR^3))$ we have that $v\otimes v\in L^\infty\loc(\RR_+;L^1(\RR^3))\subset L^\infty\loc(\RR_+;H^{-2}(\RR^3))$. We also have that $\Delta v\in  L^\infty\loc(\RR_+;H^{-2}(\RR^3))$, so $\partial_t v\in L^\infty\loc(\RR_+;H^{-2}(\RR^3))$. We infer that $v$ is Lipschitz in time with values in $H^{-2}(\RR^3)$; in particular it is strongly continuous in time with values in $H^{-2}(\RR^3)$. This strong continuity in time together with the boundedness of the $L^2$ norm implies the weak continuity of $v$ in time with values in $L^2$. In particular, we have that $v(t)$ is well-defined and belongs to $L^2(\RR^3)$ for all times $t\geq0$ (and not only for almost all times).

Let us observe now that for all $t\geq0$ we have that $\vepk(t)\rightharpoonup v(t)$ weakly in $L^2(\RR^3)$. Indeed, we know from relation \eqref{convepk} that $\vepk(t)\to v(t)$ strongly in $H^{-3}\loc(\RR^3)$, so $\langle \vepk(t),\varphi\rangle\to \langle v(t),\varphi\rangle$ for all test functions $\varphi$. The boundedness of $\vepk(t)$ in $L^2$ and the density of the test functions in $L^2$ imply that $\langle \vepk(t),\varphi\rangle\to \langle v(t),\varphi\rangle$ for all  $\varphi\in L^2$, that is $\vepk(t)\rightharpoonup v(t)$ weakly in $L^2(\RR^3)$.

Let us denote by $\chi_A$ the characteristic function of  the set $A$. We prove now that for all $t\geq0$
\begin{equation*}
\vepk(t)\chi_{B(h_\epk(t),\epk)}  \rightharpoonup 0
\end{equation*}
weakly in $L^2$. Indeed, let $g\in L^2$. Then
\begin{equation*}
|  \langle \vepk(t)\chi_{B(h_\epk(t),\epk)}, g\rangle|=|\int_{B(h_\epk(t),\epk)} \vepk(t)g|\leq \|\vepk(t)\|_{L^2}\|g\|_{L^2(B(h_\epk(t),\epk))}\toepk0
\end{equation*}
because $\|\vepk(t)\|_{L^2}$ is bounded and $\|g\|_{L^2(B(h_\epk(t),\epk))}$ goes to 0 as $\epk\to0$.

We infer that
\begin{equation*}
\vepk(t)\chi_{\RR^3\setminus B(h_\epk(t),\epk)}  \rightharpoonup  v(t) \quad\text{weakly in }L^2(\RR^3).
\end{equation*}
By the weak  lower semi-continuity of the $L^2$ norm we infer that
\begin{equation}\label{weakcv1}
\|v(t)\|_{L^2(\RR^3)}\leq \liminf_{\epk\to0}\|\vepk(t)\|_{L^2(\RR^3\setminus B(h_\epk(t),\epk))}.  
\end{equation}

Similarly, from the weak convergence 
\begin{equation*}
D( \vepk) \rightharpoonup  D(v) \quad\text{weakly in }L^2((0,t)\times\RR^3)  
\end{equation*}
we infer that
\begin{equation*}
\chi_{\RR^3\setminus B(h_\epk(t),\epk)} D(\vepk) \rightharpoonup D(v) \quad\text{weakly in }L^2((0,t)\times\RR^3)  
\end{equation*}
so by lower semi-continuity 
\begin{equation}\label{weakcv2}
\|D(v)\|^2_{L^2((0,t)\times\RR^3)}\leq \liminf_{\epk\to0}\int_0^t\|D(\vepk)\|^2_{L^2(\RR^3\setminus B(h_\epk(t),\epk))}.  
\end{equation}

We also observe at this point that the strong $L^2$ convergence of  $\ve(0)$ towards $v_0$ gives that 
\begin{equation}\label{weakcv3}
\|v_0\|_{L^2}=\liminf_{\epk\to0}\|\vepk(0)\|_{L^2}.  
\end{equation}

Finally, taking the $\liminf\limits_{\epk\to0}$ in \eqref{ineqvep} and using \eqref{weakcv1}, \eqref{weakcv2} and \eqref{weakcv3} implies the required energy inequality \eqref{ineq_vep}. This completes the proof of Theorem \ref{th2}.

\begin{remark}
We end this paper with a final remark about the weak time continuity assumed in Theorem \ref{th2}: $v_\ep\in C^0_w(\RR_+; L^{2}(\RR^{3}))$. We know that  $\wep\in C^0_w(\RR_+; L^{2}(\RR^{3}))$ so we made this hypothesis for the sake of simplicity, but it is in fact not necessary to make such an assumption. Indeed, we used it to make sense of the various terms of the form $\int _{\RR^3}\ve(t,x)\cdot \varphi_\ep(t,x)\,dx$, see for instance on page \pageref{timecont}. But the Navier-Stokes equation itself implies a time-continuity property allowing to make sense of such terms.  More precisely, let us make a change of variables to go to a fixed domain. The vector field $\vet(t,x)=\ve(t,x-h_\ep(t))$ verifies the following PDE: 
 \begin{equation}\label{nsequationtilde}
\partial_t\vet+h'_\ep\cdot\nabla\vet-\nu\Delta\vet+\vet\cdot\nabla\vet=-\nabla\widetilde\pi_\ep \quad\text{for }|x|>\ep
  \end{equation}
where $\widetilde\pi_\ep(t,x)=\pi_\ep(t,x-h_\ep(t))$. Since $\ve\in L\loc ^{\infty}(\RR_+; L^{2}(\RR^{3})) \cap L^{2}\loc(\RR_+; H^{1}(\RR^{3}))$ we also have that $\vet\in L\loc ^{\infty}(\RR_+; L^{2}(\RR^{3})) \cap L^{2}\loc(\RR_+; H^{1}(\RR^{3}))$. Recalling that $h_\ep$ is Lipschitz, we infer by classical estimates that $h'_\ep\cdot\nabla\vet-\nu\Delta\vet+\vet\cdot\nabla\vet\in L\loc ^{\frac43}(\RR_+; H^{-1}(\RR^{3}))$. So, if we choose some $\Phi \in C^\infty_{0,\sigma}(|x|>\ep)$ and we multiply \eqref{nsequationtilde} by $\Phi$, the pressure goes away and we obtain that
\begin{equation*}
|\langle \partial_t\vet, \Phi\rangle|=|\langle h'_\ep\cdot\nabla\vet-\nu\Delta\vet+\vet\cdot\nabla\vet,\Phi\rangle|\leq \|h'_\ep\cdot\nabla\vet-\nu\Delta\vet+\vet\cdot\nabla\vet\|_{H^{-1}}\|\Phi\|_{H^1}.
\end{equation*}
If we denote by $X$ the dual of $C^\infty_{0,\sigma}(|x|>\ep)$ for the $H^1$ norm, the relation above implies that
\begin{equation*}
\|\partial_t\vet\|_X\leq \|h'_\ep\cdot\nabla\vet-\nu\Delta\vet+\vet\cdot\nabla\vet\|_{H^{-1}}
\end{equation*}
so $\partial_t\vet\in L\loc ^{\frac43}(\RR_+;X)$. In particular $\vet\in C^0(\RR_+;X)$. We infer that $\int \vet(t,\cdot)\cdot\Phi$ is well-defined for all $t\geq0$ and  $\Phi \in C^\infty_{0,\sigma}(|x|>\ep)$. Going back to the original variables, we infer that if $\varphi_\ep \in C^\infty_{0,\sigma}(|x-h_\ep(t)|>\ep)$ then  $\int \ve(t,\cdot)\cdot\varphi_\ep$ is well-defined. 

\medskip

\end{remark}

\section*{Acknowledgments}   J.H. and D.I. have been partially funded by the ANR project Dyficolti ANR-13-BS01-0003-01. D.I. has been  partially funded by the LABEX MILYON (ANR-10-LABX-0070) of Université de Lyon, within the program ``Investissements d'Avenir'' (ANR-11-IDEX-0007) operated by the French National Research Agency (ANR).

\bigskip

\begin{description}
\item[J. He] Université de Lyon, Université Lyon 1 --
CNRS UMR 5208 Institut Camille Jordan --
43 bd. du 11 Novembre 1918 --
Villeurbanne Cedex F-69622, France.\\
Email: \texttt{jiao.he@math.univ-lyon1.fr}
\item[D. Iftimie] Université de Lyon, Université Lyon 1 --
CNRS UMR 5208 Institut Camille Jordan --
43 bd. du 11 Novembre 1918 --
Villeurbanne Cedex F-69622, France.\\
Email: \texttt{iftimie@math.univ-lyon1.fr}
\end{description}

\end{document}